\documentclass[11pt]{article}
\usepackage[utf8]{inputenc}

\usepackage[a4paper,top=3cm,bottom=2cm,left=3cm,right=3cm,marginparwidth=1.75cm]{geometry}

\usepackage[toc,page]{appendix}
\usepackage{graphicx} 
\usepackage{amsthm}
\usepackage{amsmath}
\usepackage{amssymb}
\usepackage{tikz}
\usepackage{authblk}
\usepackage{subcaption}
\usepackage{float} 
\usetikzlibrary{calc, chains,
                fit,
                positioning,
                shapes,
                decorations.pathreplacing}

\newtheorem{theorem}{Theorem}[section]
\newtheorem{corollary}{Corollary}[section]
\newtheorem{lemma}{Lemma}[section]
\newtheorem{proposition}{Proposition}[section]
\newtheorem{remark}{Remark}[section]
\newtheorem{note}{Note}[section]

\newtheorem{definition}{Definition}[section]

\newcommand{\mr}{\mathrm}

\newcommand{\DN}{\mathrm{DN}}

\newcommand{\s}{\mathsf}

\newcommand{\D}{\mathcal{D}}

\author{Shivjyot Brar, Sheng-Chang Chen, Sayonita Ghosh Hajra, Santosh Kandel}

\title{Gluing Formula for the Pseudo-Determinant of Graph Laplacian and Applications to Counting of Spanning Trees}

\begin{document}

\date{}
\maketitle

\begin{abstract}
\noindent
    In this paper, we establish a gluing formula for the pseudo-determinant of the Laplacian on a simple finite graph. We achieve this by using the gluing formula for the determinant of massive Laplacian and the perturbation theory technique. In addition, we apply this gluing relation to derive a gluing formula for the number of spanning trees and rooted spanning forests on simple finite graphs.
\end{abstract}

{\textbf{2020 Mathematics Subject Classification:} 05C30, 15A15}

{\small \textbf{Keywords:} Gluing formula, Pseudo-determinant, Spanning trees, Spanning forests}

\section{Introduction}

\noindent

The pseudo-determinant of a square matrix is the product of its non-zero eigenvalues. If the square matrix is invertible, then its pseudo-determinant is its determinant. For a detailed discussion of pseudo-determinants, we refer to \cite{Knill2014}, where, among many results, a Cauchy–Binet formula for pseudo-determinants is established.

In combinatorial graph theory, the pseudo-determinant plays a prominent role, as the ratio of the pseudo-determinant of the graph Laplacian to the number of vertices counts the number of spanning trees \cite{Kelmans1974, kirchhoff1847, Klee2019}.
Counting the number of spanning trees is a central problem in combinatorial graph theory and has been extensively studied in the literature \cite{gross2015, Zhou2020}. The number of spanning trees of the complete graph $K_n$ is given by the Cayley's formula $n^{n-2}$ \cite{Cayley2009}. A generalization of Cayley's formula for regular graphs is considered in \cite{sachs1962}. Another generalization is the so-called Weinberg formula \cite{weinberg1958, moon1970}, which computes the number of spanning trees of a graph obtained from the complete graph by removing the edges incident to a single vertex. Many techniques have been developed to compute explicit formulas for the number of spanning trees in various classes of graphs \cite{Asaner2022, Klee2019, Zhou2020}.

Finding an explicit formula for the number of spanning trees of a graph is not feasible in general. If a graph has a small number of vertices, the number of spanning trees can be computed directly. However, when a graph has a large number of vertices, computing the number of spanning trees becomes increasingly tedious. One approach to systematically address this issue is to decompose the graph into smaller subgraphs and then express the number of spanning trees in terms of those associated with the smaller pieces.

 The idea of ``cutting'' and ``gluing'' is similar to the Mayer-Vietoris theorem and Seifert–Van Kampen theorem in Topology \cite{Hatcher2002}. In Quantum Field Theory (QFT) literature, it is known as the locality principle \cite{reshetikhin2015}. In QFT, one of the key quantities of interest is the so-called partition function on a space-time, which, in simplified cases, is given by a Gaussian integral. The locality principle suggests that the partition function on a space–time can be obtained by gluing together the partition functions on smaller pieces obtained by cutting the space–time. Developing a rigorous mathematical framework for QFT based on the locality principle is, in fact, a very active area of research. We refer interested readers to \cite{reshetikhin2015, contreras2024} and references therein.

From the QFT perspective, determinants can be related to partition functions of a Gaussian theory on a space–time \cite{reshetikhin2015, contreras2024}.  Computing the number of spanning trees and spanning forests involves evaluating determinants, and hence such quantities can be represented using partition functions. Connections between combinatorial quantities and partition functions in statistical mechanics have been explored in the literature \cite{DD1988, Kenyon2011}.  Reinterpretations of combinatorial quantities as Grassmann integrals and generalizations of Kirchhoff’s Matrix–Tree theorem have also been studied \cite{caracciolo2004}.  While many studies relate combinatorial objects to quantities in statistical mechanics and QFT,  to the best of our knowledge,  the application of the locality principle to combinatorial quantities has not received much explicit attention.

The primary objective of this work is to establish a gluing formula for the pseudo-determinant of graph Laplacian and then use it to deduce a gluing formula for the spanning tree enumerator.  We also establish Schur complement type formula for the pseudo-determinant of graph Laplacian and show that it is equivalent to Schur complement type formula for the spanning tree enumerator derived in \cite{Zhou2020}. Thus,  this work complements the work in \cite{Zhou2020}. Our approach uses a gluing formula for the determinant of the massive Laplacian \cite{reshetikhin2015, contreras2024} followed by a perturbation-theoretic argument. In addition,  we study the Dirichlet-to-Neumann map associated to the characteristics matrix of the Laplacian on a finite simple graph, which can be of independent interest.

The secondary objective of this work is to provide more conceptual insight on the gluing formula for the spanning tree enumerator using the locality principle in QFT.  It is possible to relate determinant of the determinant of the massive Laplacian to the partition function of a Gaussian QFT on the finite graph.  Here,  the role of  space–time  is played by a finite graph.  Then an application of locality principle yields the gluing formula for the determinant of the massive Laplacian \cite{reshetikhin2015, contreras2024}.  We note that a direct derivation of such a gluing formula is possible; however,  the QFT-based approach provides insight into why such relations are expected to hold.   This suggests that  if we can represent combinatorial quantities as quantum field theoretic object,  then,  an application of locality principle may provide new insight on such quantities. We think of this work as a toy model in that direction.

This paper is organized as follows.  We recall basic definitions and key results that will be used throughout the paper in Section \ref{Section: preliminaries}.   We present main theorems and corollaries in Section 3 and also provide proofs of corollaries. We study Dirichlet-to-Neumann map on a finite graph,  which plays a crucial role in the proof of main results,  in Section 4. We also briefly mention QFT tools relevant to this work in Section 4 and we prove our main theorems in Section 5. Finally, in Section 6, we provide examples of Dirichlet-to-Neumann maps and use gluing formula to give closed form formula for the number of spanning trees.

\section{Notation and Conventions} \label{Section: preliminaries}
In this section,  we briefly discuss basic definitions and some known results used throughout this work.  We refer to \cite{cioabua2009} for graph theory related definitions.

\begin{definition}[Pseudo-determinant]
    For a square matrix $A$, the pseudo-determinant of $A$ is denoted by $\det^{\prime}(A)$ and is defined as the product of non-zero eigenvalues. 
 \end{definition}

\begin{remark} When a square matrix $A$ is invertible, then 
   $\det^{\prime}(A) = \det(A)$. 
\end{remark}

\noindent
The following lemma is immediate. 
\begin{lemma} Let $A$ be a square matrix and $p(\lambda) = \det(A + \lambda I)$ be the characteristics polynomial of $A$. If $0$ is an eigenvalue of $A$ with multiplicity $k$, then
\[
{\det}^{\prime}(A) =\dfrac{1}{k!}\dfrac{d^k}{d\lambda^k}p(\lambda)\Big|_{\lambda=0}.
\]
\end{lemma}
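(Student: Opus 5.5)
The plan is to factor the characteristic polynomial over $\mathbb{C}$ and read off the $k$-th Taylor coefficient at $\lambda=0$. Throughout, ``multiplicity'' means algebraic multiplicity, i.e.\ the multiplicity of $0$ as a root of $\det(\lambda I - A)$. This is also the reading under which the pseudo-determinant is the product of the non-zero eigenvalues counted with multiplicity. For the graph Laplacian, which is symmetric, algebraic and geometric multiplicities coincide anyway.

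First, let $\mu_1,\dots,\mu_n$ be the eigenvalues of the $n\times n$ matrix $A$, listed with algebraic multiplicity, for instance by triangularizing $A$ over $\mathbb{C}$ (Schur form $A = UTU^{*}$). Then $A+\lambda I = U(T+\lambda I)U^{*}$, so
\[
p(\lambda)=\det(A+\lambda I)=\prod_{i=1}^{n}(\mu_i+\lambda).
\]
By hypothesis exactly $k$ of the $\mu_i$ vanish. Reindex so that $\mu_1=\dots=\mu_k=0$ and $\mu_{k+1},\dots,\mu_n\neq 0$. This gives
\[
p(\lambda)=\lambda^k q(\lambda),\qquad q(\lambda)=\prod_{i=k+1}^{n}(\mu_i+\lambda),
\]
and $q(0)=\prod_{i>k}\mu_i={\det}^{\prime}(A)$ by definition.

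Second, differentiate $k$ times with the Leibniz rule:
\[
\frac{d^k}{d\lambda^k}\bigl(\lambda^k q(\lambda)\bigr)=\sum_{j=0}^{k}\binom{k}{j}\Bigl(\frac{d^j}{d\lambda^j}\lambda^k\Bigr)q^{(k-j)}(\lambda).
\]
At $\lambda=0$ every term with $j<k$ vanishes, because $\frac{d^j}{d\lambda^j}\lambda^k$ is a multiple of $\lambda^{k-j}$. The only surviving term is $j=k$, which equals $k!\,q(0)$. Dividing by $k!$ yields the claim. Equivalently, $\frac{1}{k!}p^{(k)}(0)$ is the coefficient of $\lambda^k$ in $p$, and that coefficient is the constant term of $q$.

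There is no real obstacle here. The only point needing care is the one above: the factorization $p(\lambda)=\lambda^k q(\lambda)$ with $q(0)\neq 0$ uses the algebraic multiplicity. If $k$ were taken to be a geometric multiplicity smaller than the algebraic one, the formula would give $0$. I would add a one-line remark to that effect, and note that for the symmetric Laplacian matrices used later the two notions agree.
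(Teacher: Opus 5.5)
Your proof is correct and is the argument the paper has in mind. The paper gives no proof and simply calls the lemma immediate; your factorization $p(\lambda)=\lambda^k q(\lambda)$ with $q(0)={\det}^{\prime}(A)$, followed by reading off the coefficient of $\lambda^k$, is what makes it so, and your remark that $k$ must be the algebraic multiplicity (otherwise the right-hand side is $0$) is a useful clarification the paper leaves implicit.
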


  Let us fix some notations and conventions from graph theory.  Throughout this work, we assume that a graph is a \textit{simple graph}, i.e., the graph has neither any self loops nor multiple edges between the same pair of vertices and also connected. We will use $V(G)$ to denote the set of vertices of a graph $G$ and $E(G)$ to denote the set of edges of $G$. A \emph{tree} is a connected graph with no cycles and a forest is a graph with no cycles. A \emph{rooted tree} is a pair $(T,v)$ where $T$ is a tree and $v$ is a vertex in $T$ and a rooted forest is a forest with each of its components a rooted tree. Given a positive integer $k$, a $k$-forest is a forest with $k$ connected components. We say that a subgraph $Y$ of a graph $G$ is \emph{full subgraph} if $\{u,v\} \in E(G)$ and $u,v \in V(Y)$, then $\{u,v\} \in E(Y)$.

A \emph{weight} on a graph $G$ is a function $w: V(G) \times V(G) \to \mathbb{R}$ such that $w((v_i,v_j))= w((v_j,v_i))$ for all $v_i, v_j \in V(G)$ and $w((v_i,v_j))=0$ if $ \{v_i,v_j\}\notin E(G)$. A weight $w$ can be represented by the matrix $\displaystyle\boldsymbol{w} = (w_{ij})_{\{1\leq i,j\leq |V(G)|\}}$.  A \emph{weighted graph} is a pair $(G,\boldsymbol{w)}$.   Let $\mathcal{ST}(G)$ denote the set of all spanning trees in $G$. The spanning tree enumerator (or the Kirchhoff polynomial) \cite{Klee2019, Zhou2020} of the weighted graph $(G,\boldsymbol{w})$ is defined by as 
 \begin{equation*} 
     \tau(G, \boldsymbol{w}) = \sum_{T\in \mathcal{ST}(G)}\prod_{\{\{v_i,v_j\}\in E(T)\}} w_{ij}. 
 \end{equation*} The Laplacian matrix $L(G, \boldsymbol{w})$ of $(G,\boldsymbol{w})$ is given by

\begin{equation*}\label{Eqon: a weighted laplacian}
    L(G, \boldsymbol{w})(i,j) = \begin{cases}
        d (v_i, \boldsymbol{w}) \quad \text{if} \quad i=j, \\
        - w_{ij} \quad \quad\, \text{if} \quad i \ne j \,\,\text{and}\,\, \{v_i,v_j\}\in E(G), \\
        0 \qquad \quad\,\,\,\,\, \text{otherwise},
    \end{cases}
\end{equation*} where $d (v_i, \boldsymbol{w})$ denote the sum of all $w_{ij}$ for which $\{v_i,v_j\}\in E(G)$.   When $w_{ij}=1$ for all $\{v_i,v_j\} \in E(G)$, we will denote the Laplacian by $L(G)$.  Similarly, we will denote $\tau(G, \boldsymbol{w})$ by $\tau(G)$ and it counts the number of spanning trees on $G$.

A version of the celebrated Matrix-Tree theorem relates the spanning tree enumerator to the Laplacian:
\begin{theorem}[\cite{kirchhoff1847, Klee2019, Zhou2020}] 
\begin{equation}
 \tau(G, \boldsymbol{w}) = {\det}'(L(G, \boldsymbol{w}))/|V(G)|
 \end{equation}
\end{theorem}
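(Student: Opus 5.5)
The plan is to combine the classical cofactor form of the Matrix-Tree theorem with the characteristic-polynomial description of the pseudo-determinant from the preceding lemma. Write $L = L(G,\boldsymbol{w})$ and $n=|V(G)|$.

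The argument has three steps.

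\textbf{Step 1 (cofactor version).} We establish that every principal $(n-1)\times(n-1)$ minor $\det L^{(i)}$ of $L$, obtained by deleting row and column $i$, equals $\tau(G,\boldsymbol{w})$. We would apply the Cauchy--Binet formula to the factorization $L = B W B^{T}$. Here $B$ is the signed vertex-edge incidence matrix for an arbitrary orientation and $W=\mathrm{diag}(w_e)$. Deleting row $i$ gives $\det L^{(i)} = \sum_{S} \det(B^{(i)}_S)^2 \prod_{e\in S} w_e$, where $S$ runs over $(n-1)$-subsets of edges. The standard observation is that $\det B^{(i)}_S = \pm 1$ if $S$ is the edge set of a spanning tree and $0$ otherwise. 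If $S$ contains a cycle, the columns are dependent. If $S$ is acyclic with $n-1$ edges, it is a spanning tree, and one computes the determinant by repeatedly expanding along a row of a leaf vertex different from $i$, which has a single nonzero entry $\pm1$.

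\textbf{Step 2 (expand the characteristic polynomial).} The linear coefficient of $p(\lambda)=\det(L+\lambda I)$ is the sum of all principal $(n-1)$-minors, so
\[
p'(0)=\sum_{i=1}^n \det L^{(i)} = n\,\tau(G,\boldsymbol{w}).
\]

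\textbf{Step 3 (identify the zero eigenvalue).} We must show that $0$ is an eigenvalue of $L$ of multiplicity exactly one. This step needs some care, because the preceding lemma is stated in terms of multiplicity and $L$ need not be symmetric-positive when weights are arbitrary real numbers. The rows of $L$ sum to zero, so $L\mathbf{1}=0$ and $p(0)=0$. If $\tau(G,\boldsymbol{w})\neq 0$, then $p'(0)\neq 0$, so $0$ is a simple root. The lemma with $k=1$ then gives $\det'(L)=p'(0)=n\tau(G,\boldsymbol{w})$.

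The main obstacle is the degenerate case in which real weights make $\tau(G,\boldsymbol{w})=0$. Then the algebraic multiplicity $k$ of the zero eigenvalue exceeds one, and $\det'(L)$ is a higher coefficient of $p$, so the identity must be read with this caveat. For positive weights there is no such problem. Indeed, $L=BWB^T$ is positive semidefinite, and $x^TLx=\sum w_{ij}(x_i-x_j)^2=0$ forces $x$ to be constant on the connected graph $G$. Hence the kernel is one-dimensional and $k=1$. I would therefore present the theorem under the standing assumption of positive weights, or equivalently $\tau\neq0$, which covers the unweighted case $\tau(G)$ used later, and note the generic validity as a polynomial identity otherwise.
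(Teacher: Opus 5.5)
The paper gives no proof of this statement. It quotes the classical Matrix--Tree theorem with citations, so there is no internal argument to compare yours against.

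Your proof is the standard one and it is correct:
\begin{itemize}
\item Cauchy--Binet applied to $L^{(i)}=B^{(i)}W(B^{(i)})^{T}$ gives the all-cofactor identity $\det L^{(i)}=\tau(G,\boldsymbol{w})$.
\item The coefficient of $\lambda$ in $\det(L+\lambda I)$ is $\sum_i\det L^{(i)}$.
\item Passing from $p'(0)$ to ${\det}'(L)$ through a simple zero eigenvalue is the same device the paper uses later. In the proof of Theorem~\ref{theorem:main theorem-1} it writes ${\det}'(L(G))=\frac{d}{d\lambda}\det(C(G,\lambda))\big|_{\lambda=0}$.
\end{itemize}
So your Step~1 supplies the combinatorial input the paper takes from the literature, and your Steps~2--3 match how the paper handles pseudo-determinants elsewhere.

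Your caveat about real weights is justified, and it can be stated more sharply.
\begin{itemize}
\item The identity $\frac{d}{d\lambda}\det(L+\lambda I)\big|_{\lambda=0}=|V(G)|\,\tau(G,\boldsymbol{w})$ holds unconditionally.
\item The pseudo-determinant version holds if and only if $\tau(G,\boldsymbol{w})\neq0$. The reason is that ${\det}'(L)$ is a product of nonzero eigenvalues, so it never vanishes.
\item A concrete counterexample: the triangle with $w_{12}=w_{13}=1$ and $w_{23}=-\tfrac12$ has $\tau=1-\tfrac12-\tfrac12=0$. Its Laplacian has rank one with eigenvalues $0,0,3$, so ${\det}'(L)/3=1\neq\tau$.
\end{itemize}
A small wording point: $L$ is always symmetric. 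For real weights, only positive semidefiniteness can fail.

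Restricting to positive edge weights costs the paper nothing. Apart from the unweighted $\tau(G)$, the only weighted Laplacians to which the theorem is applied are Dirichlet-to-Neumann maps $\DN(G,Y)$, which includes $\DN(G_1,Y,G_2)=\DN(G,Y)$. By Proposition~\ref{prop: DN is a weighted Laplacian} their weights are positive on the edges of the Schur complement graph. By Corollary~\ref{corollary: kernel of DN map} their kernel is one-dimensional. So your hypothesis, equivalently $k=1$, holds in every place the paper uses the theorem.
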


The Matrix-Tree theorem has been generalized in many different directions. One of them, that will be used in this work, is the so-called all minor version of the Matrix-Tree theorem. Let $U \subseteq V(G)$  and $L(G\,|\,U)$ be the matrix obtained from $L(G)$ by removing the rows and columns associated to vertices in $U$. The following theorem, by Chaiken states that $\det(L(G\,|\,U))$ counts the number of rooted spanning $k$-forests in $G$ with roots in $U$. 

\begin{theorem}[\cite{chaiken1982}]\label{Thm:Chaiken} Let $G$ be a simple graph with $V(G) = \{v_1,\ldots, v_n\}$ and $U \subseteq V(G)$ such that $|U|=k$. Then, 
$\det(L(G\,|\,U))$ counts the number of rooted spanning $k$-forests that are rooted in $U$. 
\end{theorem}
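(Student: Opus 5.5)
The plan is to derive Theorem \ref{Thm:Chaiken} from the Cauchy--Binet formula applied to an incidence factorization of the Laplacian. The Laplacian can be written as $L(G)=BB^{T}$, where $B$ is the $|V(G)|\times|E(G)|$ oriented incidence matrix: orient each edge arbitrarily, and let the column of edge $e=(v_i\to v_j)$ have entry $+1$ in row $i$, $-1$ in row $j$, and $0$ elsewhere. Deleting the rows indexed by $U$ gives a matrix $B_U$ of size $(n-k)\times|E(G)|$ with $L(G\,|\,U)=B_UB_U^{T}$. By Cauchy--Binet,
\[
\det(L(G\,|\,U))=\sum_{S\subseteq E(G),\ |S|=n-k}\det(B_U[S])^2,
\]
where $B_U[S]$ is the square submatrix of $B_U$ with columns indexed by $S$. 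The theorem then reduces to the following claim: $\det(B_U[S])\in\{0,\pm1\}$, and it is nonzero exactly when the spanning subgraph $(V(G),S)$ is a forest in which every component contains exactly one vertex of $U$. Such a forest has $k$ components, and rooting each component at its vertex of $U$ gives a rooted spanning $k$-forest rooted in $U$; conversely every such rooted forest arises this way from a unique $S$. So the claim gives exactly the stated count.

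The claim is proved in two directions.

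\emph{Vanishing.} Suppose $(V(G),S)$ contains a cycle. The signed sum of the corresponding columns, with signs following a traversal of the cycle, is zero, so the columns are dependent and the determinant vanishes. Suppose instead $S$ is acyclic but some component $C$ contains no vertex of $U$. Then every edge of $S$ with an endpoint in $C$ has both endpoints in $C$, and all rows of $C$ survive in $B_U$. Summing the rows indexed by $C$ therefore gives zero, since each such column has one $+1$ and one $-1$ among them, so again the determinant is zero. Counting rules out the remaining bad case: an acyclic $S$ with $n-k$ edges has exactly $k$ components. If none is $U$-free, each contains at least one vertex of $U$, and since $|U|=k$ each contains exactly one.

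\emph{Unimodularity.} When every component contains exactly one root, prove $\det(B_U[S])=\pm1$ by induction on $n-k$. If some component has at least one edge, it has a leaf that is not its root, because a tree with an edge has at least two leaves. That leaf's row in $B_U[S]$ has a single nonzero entry $\pm1$. Expanding along this row deletes the leaf and its edge, which leaves a smaller rooted forest of the same type, and induction applies. The base case is the empty matrix, with determinant $1$.

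The main obstacle is mostly bookkeeping rather than anything deep. One must check carefully that deleting the $U$-rows is what turns ``$U$-free component'' into ``dependent rows.'' One must also check that a non-root leaf always exists, which holds because the root is a single vertex and a nontrivial tree has two leaves. Finally, the bijection between edge sets $S$ and rooted forests rooted in $U$ must be spelled out: the roots are forced to be the unique $U$-vertex of each component, so distinct $S$ give distinct rooted forests.
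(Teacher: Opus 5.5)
Your proof is correct. Each step holds:
\begin{itemize}
\item the factorization $L(G\,|\,U)=B_UB_U^{T}$ and the Cauchy--Binet reduction to $\sum_S\det(B_U[S])^2$;
\item both vanishing cases: a cycle gives a column relation that survives deleting rows, and a $U$-free component keeps all its rows, which sum to zero because every edge of $S$ meeting it lies inside it;
\item the count of components, which forces exactly one $U$-vertex per component;
\item the leaf expansion, since a non-root leaf lies outside $U$ and so its row is present;
\item the bijection with forests rooted in $U$, since $k$ distinct roots in a $k$-set must exhaust $U$.
\end{itemize}

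The paper gives no proof of this statement. It quotes the result from Chaiken and uses it only to read $\det(A_i)$ as $N(G_i,F,V(Y))$, so your argument serves as a self-contained substitute. It is the classical linear-algebraic route for the principal-minor, undirected case. It also gives the weighted version at no extra cost: write $L(G,\boldsymbol{w})=BWB^{T}$ with $W$ the diagonal matrix of edge weights, and each forest then contributes the product of its edge weights.

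Chaiken's cited paper instead gives a combinatorial proof of the general all-minors theorem, covering arbitrary (non-principal) minors of weighted directed Laplacians, where the real work is tracking signs. Your method could reach non-principal undirected minors through $\sum_S\det(B_W[S])\det(B_U[S])$, but only by redoing that sign analysis, which the statement as posed does not need.
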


\paragraph{Gluing two graphs} Let us introduce the notion of gluing two graphs.  Let $G_1$ and $G_2$ be two graphs and $Y$ be a {full subgraph} of $G_1$ and $G_2$. Then,  we can construct a new graph $G_1\cup_{Y} G_2$ by identifying the vertices and edges present in $Y$: $V(G_1 \cup_{Y} G_2) = V(G_1)\sqcup V(G_2)/\sim $ and $E(G_1 \cup_{Y} G_2) = E(G_1) \sqcup (E(G_2)/\sim $. Here, $v_1 \in V(G_1)$ is identified with $v_2 \in V(G_2)$, i.e, $v_1\sim v_2$ if they represent the same vertex in $Y$.  Similarly, $e_1 \in E(G_1)$ is identified with $e_2 \in E(G_2)$, i.e., $e_1\sim e_2$ if they represent the same edge in $Y$.

We end this section by recalling the notion of Schur complement of a set of vertices in a graph introduced in \cite{Zhou2020}.  Let $G$ be a graph and $V(G) = V_1 \sqcup V_2$.  Let us write 
\begin{equation}\label{eqon: block matrix of Laplacian}
    L(G) = 
\begin{bmatrix}
    A & B \\
    B^T & D 
\end{bmatrix}
\end{equation} where $A$ and $D$ are principal minors corresponding to $V_1$ and $V_2$ respectively.  Let $S = D - B^{T}A^{-1}B$ be the Schur complement of $A$ in $L(G)$. 

\begin{definition}
 Define a graph $S(V_2)$ by setting $V(S(V_2)) = V_2$ and there is an edge joining $v,v^{\prime} \in V_2, v \ne v'$ if and only if $vv^{\prime}$ component of $S$ is non-zero. The graph $S(V_2)$ is called the Schur complement  graph associated with $V_2$.     
\end{definition}

 Let $Y$ be a subgraph of $G$ and assume that $V(Y) = V_2$. Then, the matrix $D - B^{T}A^{-1}B$ is also called the \emph{Dirichlet-to-Neumann map} \cite{Curtis2000}. It is also called  the \emph{the response matrix} in Electrical Network Theory literature.  We will denote the Dirichlet-to-Neumann map by $\DN(G,Y)$. {This map is discussed in detail in Section \ref{Section: proof tools}.

\section{Main Results}

In this section, we state our main results and prove corollaries. 

\subsection{Gluing Formula for the Pseudo-Determinant}\label{subsection: gluing formula for pseudo-determinant}
The first main result is the gluing formula for the pseudo-determinant of the Laplacian on a finite simple graph. 
Let $G_1$ and $G_2$ be two graphs and $G = G_1 \cup_{Y} G_2$ where $Y$ is a full subgraph of $G_i,\, i=1,2$. For $i=1,2$, let 
 \[L(G_i) 
 = \begin{bmatrix}
      A_i& B_i \\[1mm]
         B_i^{T}& D_i
 \end{bmatrix}, 
\] and $\DN(G_i,Y)$ be the Dirichlet-to-Neumann map associated to $Y$ in $G_i$. 

\begin{theorem}\label{theorem:main theorem-1}The following relation holds: 
\begin{equation}\label{eqon: gluing formula Laplacian}
{\det}^{\prime}(L(G)) = \dfrac{|V(G)|}{|V(Y)|}\det(A_1) \det(A_2) {\det}^{\prime}(\DN(G_1,Y,G_2)),
\end{equation} where $\DN(G_1,Y,G_2) = \DN(G_1,Y) + \DN(G_2,Y) - L(Y)$. 
\end{theorem}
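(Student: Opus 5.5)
We take a massive deformation and then extract the leading order in the mass. For $m>0$, consider the block decomposition of $L(G)+m I$ with respect to $V(G)=V_1'\sqcup V_2'\sqcup V(Y)$, where $V_i'=V(G_i)\setminus V(Y)$.

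\textbf{Step 1: the block form of the glued Laplacian.} Since $Y$ is a full subgraph and there are no edges between $V_1'$ and $V_2'$, we have
\[
L(G)+mI=\begin{bmatrix} A_1+mI & 0 & B_1\\ 0 & A_2+mI & B_2\\ B_1^T & B_2^T & D_1+D_2-L(Y)+mI\end{bmatrix}.
\]
Here $D_1+D_2-L(Y)$ is the $Y$-block of $L(G)$. Degrees of $Y$-vertices add, and edges inside $Y$ are counted twice, which the $-L(Y)$ term corrects.

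\textbf{Step 2: the massive gluing formula.} Take the Schur complement with respect to the invertible block $\mathrm{diag}(A_1+mI,A_2+mI)$. Write
\[
\DN_m(G_i,Y)=D_i+mI-B_i^T(A_i+mI)^{-1}B_i .
\]
Then
\[
\det(L(G)+mI)=\det(A_1+mI)\det(A_2+mI)\det\bigl(\DN_m(G_1,Y)+\DN_m(G_2,Y)-L(Y)-mI\bigr).
\]
Here $A_i$ is invertible for $m\ge 0$ because each $G_i$ is connected and $Y$ is nonempty. Indeed, $A_i$ is a reduced Laplacian with $|V(Y)|\ge1$ roots, so it is positive definite by Theorem \ref{Thm:Chaiken}.

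\textbf{Step 3: extracting the $m^1$ coefficient.} $G$ is connected, so $0$ is a simple eigenvalue of $L(G)$. By the lemma on pseudo-determinants, ${\det}'(L(G))$ is the coefficient of $m$ in $\det(L(G)+mI)$. The prefactors tend to $\det A_1\det A_2$. Set $M(m)=\DN_m(G_1,Y)+\DN_m(G_2,Y)-L(Y)-mI$. Then
\[
M(0)=\DN(G_1,Y,G_2),
\]
and $\det M(0)=0$, since the constant vector on $Y$ is in its kernel: $\DN(G_i,Y)$ annihilates constants because $L(G_i)$ does, and so does $L(Y)$. We therefore need $\det M(m)=c\,m+O(m^2)$ and must identify $c$.

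\textbf{Step 4: identifying $c$ (the main obstacle).} Show that $\DN(G_1,Y,G_2)$ has a one-dimensional kernel spanned by $\mathbf 1_Y$. This follows because, after a Schur complement, it equals the Laplacian of the connected Schur complement graph, a weighted graph Laplacian on $Y$. Then use the first-order perturbation formula
\[
\frac{d}{dm}\det M(m)\Big|_{m=0}={\det}'(M(0))\,\frac{\langle \mathbf 1, M'(0)\mathbf 1\rangle}{\langle \mathbf 1,\mathbf 1\rangle}.
\]
This holds because $M(0)$ is symmetric with a simple zero eigenvalue and kernel vector $\mathbf 1$, so $\det M(m)=\det' M(0)\cdot\mu(m)$ to first order, with $\mu'(0)=\langle \mathbf 1,M'(0)\mathbf 1\rangle/|V(Y)|$.

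It remains to compute $M'(0)$, which is
\[
M'(0)=\sum_{i=1,2}\bigl(I+B_i^TA_i^{-2}B_i\bigr)-I .
\]
Since $L(G_i)\mathbf 1=0$, we get $A_i\mathbf 1+B_i\mathbf 1=0$, hence $A_i^{-1}B_i\mathbf 1=-\mathbf 1_{V_i'}$. This gives
\[
\langle\mathbf 1,B_i^TA_i^{-2}B_i\mathbf 1\rangle=\|A_i^{-1}B_i\mathbf 1\|^2=|V_i'| .
\]
Therefore
\[
\langle\mathbf 1,M'(0)\mathbf 1\rangle=|V(Y)|+|V_1'|+|V_2'|=|V(G)|,
\]
and so $c={\det}'(\DN(G_1,Y,G_2))\,|V(G)|/|V(Y)|$. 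Multiplying by $\det A_1\det A_2$ gives \eqref{eqon: gluing formula Laplacian}.

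The delicate points are justifying the perturbation formula, for example via the spectral decomposition of $M(0)$ and the Taylor expansion of the smallest eigenvalue, and checking that the kernel of $\DN(G_1,Y,G_2)$ is exactly one-dimensional.
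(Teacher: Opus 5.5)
Your proof is correct, and at the key step it takes a genuinely different route from the paper.

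\textbf{Massive identity.} The paper obtains $\det C(G,\lambda)=\det A_1(\lambda)\det A_2(\lambda)\det\DN(G_1,Y,G_2,\lambda)$ by citing the QFT/locality formula for $\lambda>0$. It then extends this to a neighborhood of $0$ with the identity theorem. You read the identity off the block structure and the Schur determinant formula. That formula holds algebraically wherever $A_i+mI$ is invertible, so no analytic continuation is needed.

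\textbf{Derivative of $\det M$.} For $\frac{d}{dm}\det M(m)|_{m=0}$ (the paper's Theorem~\ref{Thm: pesudo-determinant of DN Map}), the paper works with the inverse. It uses $\DN(G,Y,\lambda)^{-1}=\widehat D$, the $Y$-block of $C(G,\lambda)^{-1}$, and expands $C(G,\lambda)^{-1}$ spectrally to isolate the $1/\lambda$ pole carried by the zero mode $\psi_1=\mathbf 1/\sqrt{|V(G)|}$. From this it gets $\mu_1(\lambda)\approx\lambda/a_1^2$ with $a_1^2=|V(Y)|/|V(G)|$. You instead differentiate $M$ itself and apply the first-order formula for a simple eigenvalue. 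You then compute $\langle\mathbf 1,M'(0)\mathbf 1\rangle=|V(G)|$ from $A_i^{-1}B_i\mathbf 1=-\mathbf 1$, i.e.\ from the fact that the harmonic extension of a constant is constant.

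Both routes give $\mu_1'(0)=|V(G)|/|V(Y)|$, from dual viewpoints. The paper reads it from the pole of the Green's function. You read it from the derivative of the Dirichlet energy, which equals the squared norm of the harmonic extension to all of $G$. The paper's version fits its BFK/QFT narrative. Yours is shorter, avoids the resolvent and eigenvector continuity, and uses only simplicity of the zero eigenvalue. The paper's lemmas are phrased with strictly increasing spectra, which is more than needed.

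\textbf{The two points you flag.} Both close cleanly.
\begin{enumerate}
\item \emph{Perturbation formula.} Jacobi's formula gives $\frac{d}{dm}\det M(m)|_{m=0}=\mathrm{tr}\bigl(\mathrm{adj}(M(0))\,M'(0)\bigr)$. For symmetric $M(0)$ with simple zero eigenvalue, $\mathrm{adj}(M(0))={\det}'(M(0))\,\hat u\hat u^T$ with $\hat u=\mathbf 1/\sqrt{|V(Y)|}$, which yields your formula.
\item \emph{One-dimensional kernel.} The kernel of $M(0)$ is one-dimensional by rank additivity of the Schur complement (the paper's Corollary~\ref{corollary: kernel of DN map}). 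Alternatively, it follows directly from your Step 2. A kernel of dimension at least two would make two eigenvalues of $M(m)$ of size $O(m)$ by Weyl's inequality, so $\det M(m)=O(m^2)$. Then $\det(L(G)+mI)=O(m^2)$, contradicting that $0$ is a simple eigenvalue of $L(G)$.
\end{enumerate}
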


\subsection{Schur Complement Type Formulas}
Let $G$ be a graph and $Y$ be a full subgraph of $G$. Then, using the same argument used to prove Theorem \ref{theorem:main theorem-1}, we get the following Schur complement  formula for the pseudo-determinant of the Laplacian. 
\begin{theorem} \label{theorem:schur complement formula}
Let
\[
L(G) = \begin{bmatrix}
    A & B \\
    B^{T} & D 
\end{bmatrix}, 
\] then the following holds: 
\begin{equation}\label{eqon: Schur complement}
    {\det}'(L(G)) = \dfrac{|V(G)|}{|V(Y)|}\det(A){\det}'(\DN(G,Y)). 
\end{equation}
\end{theorem}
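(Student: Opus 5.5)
We prove Theorem \ref{theorem:schur complement formula} by perturbing with a mass term, as the paper suggests: apply the ordinary Schur complement formula to the massive Laplacian $L(G)+\lambda I$, and extract the coefficient of $\lambda$ via the lemma on pseudo-determinants. Since $G$ is connected, $0$ is a simple eigenvalue of $L(G)$ with eigenvector $\mathbf 1$. The lemma with $k=1$ therefore gives ${\det}'(L(G)) = p'(0)$, where $p(\lambda)=\det(L(G)+\lambda I)$. Throughout we assume, as in the setting of the theorem, that $A$ is invertible. This holds because $A=L(G\,|\,V(Y))$ with $V(Y)\neq\emptyset$, and by Theorem \ref{Thm:Chaiken} (or irreducible diagonal dominance) $\det A>0$ when $G$ is connected.

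\textbf{Step 1 (massive Schur complement).} For small $\lambda\ge 0$, the block $A+\lambda I$ is invertible, so
\[
p(\lambda)=\det(A+\lambda I)\,\det\bigl(D+\lambda I - B^T(A+\lambda I)^{-1}B\bigr)=\det(A+\lambda I)\,\det S(\lambda).
\]
Here $S(\lambda)=\DN(G,Y)+\lambda M+O(\lambda^2)$, with $M = I + B^TA^{-2}B$, using $\frac{d}{d\lambda}(A+\lambda I)^{-1}=-(A+\lambda I)^{-2}$.

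\textbf{Step 2 (kernel of the Dirichlet-to-Neumann map).} We show that $\DN(G,Y)$ is symmetric with a simple zero eigenvalue and kernel spanned by $\mathbf 1_Y$.
\begin{itemize}
\item From $L(G)\mathbf 1=0$ we get $A\mathbf 1_{V_1}+B\mathbf 1_Y=0$ and $B^T\mathbf 1_{V_1}+D\mathbf 1_Y=0$. Hence $\DN\,\mathbf 1_Y = D\mathbf 1_Y + B^T\mathbf 1_{V_1}=0$.
\item Conversely, if $\DN\,x=0$, then the harmonic extension $u=(-A^{-1}Bx,\,x)$ satisfies $L(G)u=0$. So $u$ is constant, and $x$ is a multiple of $\mathbf 1_Y$.
\end{itemize}
Because $p(0)=0$ and $\det A\neq 0$, we get $\det S(0)=0$. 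Therefore
\[
p'(0)=\det(A)\,\frac{d}{d\lambda}\det S(\lambda)\Big|_{\lambda=0}.
\]

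\textbf{Step 3 (first-order perturbation of a determinant with a simple zero eigenvalue).} For a symmetric matrix $S_0$ with one-dimensional kernel spanned by a unit vector $e$, Jacobi's formula gives
\[
\frac{d}{d\lambda}\det(S_0+\lambda M)\big|_0=\mathrm{tr}(\mathrm{adj}(S_0)M).
\]
Here $\mathrm{adj}(S_0)={\det}'(S_0)\,ee^T$, so the derivative equals ${\det}'(S_0)\, e^TMe$. Apply this with $S_0=\DN(G,Y)$ and $e=\mathbf 1_Y/\sqrt{|V(Y)|}$:
\[
e^TMe=\frac{1}{|V(Y)|}\Bigl(|V(Y)|+\|A^{-1}B\mathbf 1_Y\|^2\Bigr).
\]
By Step 2, $A^{-1}B\mathbf 1_Y=-\mathbf 1_{V_1}$, so $\|A^{-1}B\mathbf 1_Y\|^2=|V_1|$. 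Hence
\[
e^TMe=\frac{|V(Y)|+|V_1|}{|V(Y)|}=\frac{|V(G)|}{|V(Y)|}.
\]
Combining the three steps yields equation \eqref{eqon: Schur complement}.

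The main obstacle is Step 3. It needs the adjugate identity for a rank-deficient symmetric matrix, and it needs us to see that the correction term $B^TA^{-2}B$ contributes exactly $|V_1|$, producing the vertex-count ratio. The key input there is the identity $A^{-1}B\mathbf 1_Y=-\mathbf 1_{V_1}$ from Step 2.
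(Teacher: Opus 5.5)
Your proof is correct. It shares the paper's outer structure: differentiate $\det C(G,\lambda)=\det(A+\lambda I)\det\DN(G,Y,\lambda)$ at $\lambda=0$, and drop the term containing $\det\DN(G,Y)=0$. The key step, computing $\frac{d}{d\lambda}\det\DN(G,Y,\lambda)|_{\lambda=0}$, is where your argument genuinely differs.

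\textbf{The paper's route.} It reuses the argument of Theorem \ref{Thm: pesudo-determinant of DN Map}. It identifies $\DN(G,Y,\lambda)^{-1}$ with the $\widehat D$ block of $C(G,\lambda)^{-1}$. It then splits off the singular part $\frac{1}{\lambda}\langle\cdot,R(\psi_1)\rangle R(\psi_1)$ coming from the constant mode. Finally, eigenvalue/eigenvector perturbation theory gives $\mu_1(\lambda)=\lambda/a_1^2+O(\lambda^2)$ with $a_1^2=|V(Y)|/|V(G)|$.

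\textbf{Your route.} You differentiate the Schur complement directly, $S'(0)=I+B^TA^{-2}B$, and combine Jacobi's formula with the corank-one adjugate identity $\mathrm{adj}(S_0)={\det}'(S_0)\,ee^T$. The ratio $|V(G)|/|V(Y)|$ then comes from the harmonic-extension identity $A^{-1}B\mathbf{1}_Y=-\mathbf{1}_{V_1}$. The two computations agree: your $e^TMe$ is exactly the first-order (Hellmann--Feynman) derivative $\mu_1'(0)$, i.e.\ the paper's $1/a_1^2$.

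\textbf{What your route buys.} It is purely finite-dimensional linear algebra. It needs no continuity of eigenvectors and no appeal to Kato. It also avoids the strict orderings $\sigma_2<\dots<\sigma_n$ and $\mu_1(\lambda)<\dots<\mu_k(\lambda)$ imposed in the paper's lemmas; these fail already for $K_n$, and only simplicity of the zero eigenvalue is really needed. Nor does it need the Gaussian-integral or identity-theorem step, since the block determinant formula holds directly for all $|\lambda|<\lambda_{\min}(A)$.

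\textbf{What the paper's route buys.} It reads $|V(G)|/|V(Y)|$ conceptually as the inverse squared overlap between the normalized global zero mode and its restriction to $Y$. This is the resolvent/Green's-function picture behind the BFK and locality viewpoint. It also applies verbatim to the glued map $\DN(G_1,Y,G_2,\lambda)$. Your argument transfers there as well, via Proposition \ref{prop: gluing DN maps}.
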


\begin{remark}
 In the light of Proposition \ref{prop: gluing DN maps},  it is possible to deduce Theorem \ref{theorem:main theorem-1} from Theorem \ref{theorem:schur complement formula}. If $G = G_1\cup_{Y}G_2$, then it can be shown that $\det(A) = \det(A_1)\det(A_2)$.  
\end{remark}

\noindent
 Next, we recall the Schur complement  type formula for the number of spanning trees $\tau(G)$ derived in \cite{Zhou2020} and deduce that it is equivalent to Theorem \ref{theorem:schur complement formula}.
Let $G$ be a graph with $V(G) = V_1 \sqcup V_2$ and $S(V_2)$  be the Schur complement graph associated to $V_2$ in $G$.  Let $S$ be the Schur-complement matrix introduced in Section \ref{Section: preliminaries}.  It can be shown (Proposition \ref{prop: DN is a weighted Laplacian}) that $S$ is a weighted graph Laplacian on $S(V_2)$ for the weight $\boldsymbol{w}$ induced by $S$. Let $\tau(S(V_2, \boldsymbol{w}))$ be the spanning tree enumerator of  $(S(V_2), \boldsymbol{w})$.  In \cite{Zhou2020}, it is proved that 
\begin{equation}\label{eqon: Schur complement formula for spanning trees}
    \tau(G) = {\det}^{\prime}(A)\, \tau(S(V_2), \boldsymbol{w})
\end{equation}

\noindent
Now, when $V_2 = V(Y)$, we find that Theorem \ref{theorem:schur complement formula} is equivalent to the relation (\ref{eqon: Schur complement formula for spanning trees}). More precisely, 

\begin{corollary} The following statements are equivalent: 
\begin{enumerate}
    \item[(a)] Schur complement  type formula holds for $\tau(G)$ \cite{Zhou2020}: 
    \[
\tau(G) = {\det}^{\prime}(A)\, \tau(S(V_2), \boldsymbol{w}). 
\]
\item[(b)] Schur complement  formula holds for the pseudo-determinant of Laplacian:
\[
{\det}'(L(G)) = \dfrac{|V(G)|}{|V(Y)|}\det(A)\,{\det}'(\DN(G,Y)). 
\]
\end{enumerate}
\begin{proof} The corollary immediately follows from ${\det}'(L(G)) =|V(G)| \tau(G)$ and \\ ${\det}'(\DN(G,Y)) = |V(Y)|\tau(S(V_2), \boldsymbol{w})$. 
    
\end{proof}
\end{corollary}

\subsection{Gluing Relation for Spanning Trees}
Let us consider the same set up and notations used in Subsection \ref{subsection: gluing formula for pseudo-determinant}.   
\begin{theorem} \label{cor: main cor-2}
 The number of spanning trees $\tau(G)$  satisfies the following gluing relation:
 \[
 \tau(G) = \tau(G_1) \tau(G_2)  C(G_1, G_2, Y)
 \] where $C(G_1, G_2, Y) = \dfrac{|V(Y)| \det^{\prime}(\DN(G_1,Y,G_2))}{\det^{\prime}(\DN(G_1,Y))\det^{\prime}(\DN(G_2,Y))}$.

 \end{theorem}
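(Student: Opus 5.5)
The plan is to derive the relation directly from Theorem \ref{theorem:main theorem-1} and Theorem \ref{theorem:schur complement formula}, using the Matrix-Tree theorem ${\det}'(L(H)) = |V(H)|\,\tau(H)$ for each of $H = G, G_1, G_2$.

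First, I would apply Theorem \ref{theorem:schur complement formula} to $G_i$ with the full subgraph $Y$, for $i=1,2$. This gives
\[
|V(G_i)|\,\tau(G_i) = {\det}'(L(G_i)) = \frac{|V(G_i)|}{|V(Y)|}\det(A_i)\,{\det}'(\DN(G_i,Y)),
\]
that is,
\[
\det(A_i) = \frac{|V(Y)|\,\tau(G_i)}{{\det}'(\DN(G_i,Y))}.
\]
Here I use that ${\det}'(\DN(G_i,Y))\neq 0$. This holds because, since $G_i$ is connected, $\DN(G_i,Y)$ is the weighted Laplacian of a connected Schur complement graph (Proposition \ref{prop: DN is a weighted Laplacian}), so its pseudo-determinant is positive. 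Also, $\det(A_i)>0$ by Theorem \ref{Thm:Chaiken}.

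Second, I would apply Theorem \ref{theorem:main theorem-1} to $G$ and divide by $|V(G)|$:
\[
\tau(G) = \frac{1}{|V(Y)|}\det(A_1)\det(A_2)\,{\det}'(\DN(G_1,Y,G_2)).
\]
Substituting the expressions for $\det(A_1)$ and $\det(A_2)$ from the first step gives
\[
\tau(G) = \frac{|V(Y)|^2}{|V(Y)|}\,\tau(G_1)\tau(G_2)\,\frac{{\det}'(\DN(G_1,Y,G_2))}{{\det}'(\DN(G_1,Y))\,{\det}'(\DN(G_2,Y))}.
\]
This is exactly $\tau(G_1)\tau(G_2)\,C(G_1,G_2,Y)$.

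The argument is essentially bookkeeping once the two earlier theorems are available. The only genuine point to check is the nonvanishing of ${\det}'(\DN(G_i,Y))$ and the correct power of $|V(Y)|$, and I expect that nonvanishing check to be the main obstacle. I would handle it by noting that $\DN(G_i,Y)$ has one-dimensional kernel spanned by the constant vector. The constant vector is in the kernel because $L(G_i)\mathbf 1=0$ forces $A_i\mathbf 1 + B_i\mathbf 1 = 0$ and $B_i^T\mathbf 1 + D_i\mathbf 1=0$, whence $\DN(G_i,Y)\mathbf 1 = 0$. Any further kernel would produce a harmonic extension lying in the kernel of $L(G_i)$, which is one-dimensional by connectedness.
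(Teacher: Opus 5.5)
Your proposal is correct and follows the paper's proof: you solve Theorem~\ref{theorem:schur complement formula} (applied to each $G_i$) for $\det(A_i)=|V(Y)|\,\tau(G_i)/{\det}'(\DN(G_i,Y))$ and substitute into Theorem~\ref{theorem:main theorem-1} divided by $|V(G)|$, with the same bookkeeping of powers of $|V(Y)|$. The nonvanishing check you single out as the main obstacle is automatic, since a pseudo-determinant is by definition a product of nonzero eigenvalues (an empty product, equal to $1$, when $|V(Y)|=1$ and $\DN(G_i,Y)=0$).
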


\begin{remark} After rewriting  
\[
C(G_1, G_2, Y) = \dfrac{\det^{\prime}(\DN(G_1,Y,G_2))}{|V(Y)|}\Bigg/ \left(\dfrac{\det^{\prime}(\DN(G_1,Y))}{|V(Y)|}  \dfrac{\det^{\prime}(\DN(G_2,Y))}{|V(Y)|}\right),
\] and using Schur complement type 
formula \cite{Zhou2020}, the quantity $C(G_1, G_2, Y)$ can be interpreted as the ratio of spanning tree enumerators. 
\end{remark}

It is also possible to express the gluing formula for the number of spanning trees in terms of the number of rooted spanning forests. Assume that $|V(Y)| = k$.  We recall from the all minor theorem of Chaiken \cite{chaiken1982}, Theorem \ref{Thm:Chaiken}, that $\det(A_i)= N(G_i,F,V(Y))$, where $N(G_i,F,V(Y))$ counts the number of spanning $k$-forests in $G_i$ that are rooted in  $V(Y)$. Then,  using this interpretation of the determinant of minors,  we immediately have the following corollary which gives a gluing formula for the spanning trees in terms of the spanning forests and the determinant of the Dirichlet-to-Neumann map.  
\begin{corollary}
    \label{cor: main cor-1}The following relation holds:
\[ 
     \tau(G)=N(G_1,F,V(Y))\, N(G_2,F,V(Y))\dfrac{{\det}^{\prime}(\DN(G_1,Y,G_2))}{|V(Y)|}.
\]   
\begin{proof} The corollary follows if we use 
\[
{\det}^{\prime}(L(G))= {|V(G)|}\tau(G)
\] and \[\det(A_i)= N(G_i,F,Y)\] in Theorem \ref{theorem:main theorem-1}.  
\end{proof}    
\end{corollary}

\noindent
Since the matrix $\DN(G_1,Y,G_2)$ is a weighted Laplacian on the Schur complement  graph $S(V(Y))$ with respect to a weight $\boldsymbol{w}$ (Proposition \ref{prop: DN is a weighted Laplacian}),  we can rephrase Corollary \ref{cor: main cor-1} completely in terms of the combinatorial quantities: 

\begin{corollary}  The number of spanning trees $\tau(G)$  satisfies the following gluing relation:
    \[
 \tau(G)= N(G_1,F,V(Y))N(G_2,F,V(Y))\tau(S(V_2), \boldsymbol{w}). 
    \]
\end{corollary}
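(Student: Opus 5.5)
The statement to prove is the last corollary, $\tau(G)= N(G_1,F,V(Y))\,N(G_2,F,V(Y))\,\tau(S(V_2),\boldsymbol{w})$. The plan is to start from Corollary \ref{cor: main cor-1},
\[
\tau(G)=N(G_1,F,V(Y))\, N(G_2,F,V(Y))\,\dfrac{{\det}^{\prime}(\DN(G_1,Y,G_2))}{|V(Y)|},
\]
and convert the last factor into a spanning tree enumerator with the weighted Matrix-Tree theorem. Here $V_2=V(Y)$, and $S(V_2)$ is the Schur complement graph of $V(Y)$ in the glued graph $G=G_1\cup_Y G_2$.

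The first step is to identify $\DN(G_1,Y,G_2)=\DN(G_1,Y)+\DN(G_2,Y)-L(Y)$ with $\DN(G,Y)$, the Schur complement of the block of $L(G)$ indexed by $V(G)\setminus V(Y)$; this is what Proposition \ref{prop: gluing DN maps} provides. To see it directly, order the vertices of $G$ as (interior of $G_1$, interior of $G_2$, $Y$). There are no edges between the two interiors, because every edge of the glued graph lies in $G_1$ or in $G_2$. Hence the interior block of $L(G)$ is $\mathrm{diag}(A_1,A_2)$, and the off-diagonal block is $\begin{bmatrix} B_1\\ B_2\end{bmatrix}$. The $Y$-block of $L(G)$ is $D_1+D_2-L(Y)$, because the edges of $Y$ are counted in both $D_1$ and $D_2$ while the full-subgraph hypothesis ensures no other $Y$–$Y$ edges appear twice. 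The Schur complement therefore splits as $(D_1-B_1^TA_1^{-1}B_1)+(D_2-B_2^TA_2^{-1}B_2)-L(Y)$, which is exactly $\DN(G_1,Y,G_2)$. This uses the invertibility of $A_i$, which follows from connectivity via Theorem \ref{Thm:Chaiken}, since a connected graph has a rooted spanning forest with roots in the nonempty set $V(Y)$ and so $\det(A_i)>0$.

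The second step uses Proposition \ref{prop: DN is a weighted Laplacian}: $\DN(G,Y)$ is the weighted Laplacian of $(S(V_2),\boldsymbol{w})$, with weights $w_{vv'}=-\DN(G,Y)_{vv'}$. The weighted Matrix-Tree theorem, applied on the vertex set $V(Y)$, then gives ${\det}'(\DN(G,Y))=|V(Y)|\,\tau(S(V_2),\boldsymbol{w})$. Substituting this into Corollary \ref{cor: main cor-1} cancels the factor $|V(Y)|$ and yields the stated formula.

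The main obstacle is to apply the Matrix-Tree theorem legitimately to $\DN(G,Y)$, whose weights need not be $0/1$. The Matrix-Tree theorem is stated for weighted graphs, so what must be checked is that $\DN(G,Y)$ has zero row sums and nonpositive off-diagonal entries; that is the content of Proposition \ref{prop: DN is a weighted Laplacian}. The zero row sums follow from $L(G)\mathbf{1}=0$, which gives $A\mathbf{1}+B\mathbf{1}=0$, hence $A^{-1}B\mathbf{1}=-\mathbf{1}$ and $(D-B^TA^{-1}B)\mathbf{1}=D\mathbf{1}+B^T\mathbf{1}=0$. The sign condition follows because $A$ is a nonsingular M-matrix, so $A^{-1}\ge 0$ entrywise and therefore $B^TA^{-1}B\ge0$.

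Two further points need care. Connectivity of $S(V_2)$ is not needed for the identity, since a disconnected weighted graph gives zero on both sides. The pseudo-determinant equals the coefficient of $\lambda$ in the characteristic polynomial only when the kernel is one-dimensional, so that case should be checked; it holds here because $G$ is connected.
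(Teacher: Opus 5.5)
Your proposal is correct and follows the paper's route: the paper gets this corollary directly from Corollary \ref{cor: main cor-1} by viewing $\DN(G_1,Y,G_2)=\DN(G,Y)$ (Proposition \ref{prop: gluing DN maps}) as a weighted Laplacian on $S(V(Y))$ (Proposition \ref{prop: DN is a weighted Laplacian}) and applying the weighted Matrix-Tree theorem, and your block-matrix and M-matrix checks just fill in those steps. One aside is false but harmless: for a disconnected weighted graph ${\det}'$ is nonzero while $\tau=0$, so connectivity of $S(V_2)$ is needed. It is guaranteed by the one-dimensional kernel of $\DN(G,Y)$ (Corollary \ref{corollary: kernel of DN map}), which is exactly what your final remark uses.
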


\subsection{Gluing Relation for Spanning Forests}

It is known that $\det(L(G)+I)$ is the number of all rooted spanning forests in $G$ \cite{chebotarev2006,  DD1988, Knill2014}. 

We can use the gluing formula for $\det(L(G)+ I)$ \cite{reshetikhin2015, contreras2024}, to give a gluing relation for the number of all rooted spanning forests. Let us consider the same set up and notations as in Subsection \ref{subsection: gluing formula for pseudo-determinant}. 

\begin{theorem}\label{theorem: main theorem-2}
    Let $G_1$ and $G_2$ be two graphs and $G = G_1 \cup_{Y} G_2$, where $Y$ is a full subgraph of $G_i,\, i=1,2$. Then, 
    \[
    N(G,F) = N(G_1,F) N(G_2,F) \dfrac{\det(\DN(G_1,Y,G_2,1))}{\det(\DN(G_1,Y,1))\det(\DN(G_2,Y,1))}
    \] where $\DN(G_1,Y,G_2,1)$, $\DN(G_1,Y,1)$ and $\DN(G_2,Y,1)$ are Dirichlet-to-Neumann maps defined in Section \ref{subsubsec:DtoN}.
\end{theorem}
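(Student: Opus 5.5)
We prove Theorem \ref{theorem: main theorem-2} by applying the massive gluing formula at mass $m=1$ three times: once to $G$, once to $G_1$ (viewed as $G_1\cup_Y Y$) and once to $G_2$. Then we divide. Throughout, $N(H,F)=\det(L(H)+I)$ counts all rooted spanning forests of $H$, as recalled above. The Dirichlet-to-Neumann maps with mass are defined analogously to the massless ones. Writing
\[
L(G_i)+I=\begin{bmatrix} A_i+I & B_i\\ B_i^T & D_i+I\end{bmatrix},
\]
we set $\DN(G_i,Y,1)=D_i+I-B_i^T(A_i+I)^{-1}B_i$. We also set $\DN(G_1,Y,G_2,1)=\DN(G_1,Y,1)+\DN(G_2,Y,1)-(L(Y)+I)$. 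Since $L(H)+I$ is positive definite for every graph $H$, every principal block $A_i+I$ is invertible. Hence no pseudo-determinants or perturbation arguments are needed here; this is the easy, nondegenerate case.

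\textbf{Step 1: gluing formula for $G$.} Order $V(G)$ as $V(G_1)\setminus V(Y)$, then $V(G_2)\setminus V(Y)$, then $V(Y)$. Since $G=G_1\cup_Y G_2$ and $Y$ is full in both pieces, no edge joins $V(G_1)\setminus V(Y)$ to $V(G_2)\setminus V(Y)$. So
\[
L(G)+I=\begin{bmatrix} A_1+I & 0 & B_1\\ 0 & A_2+I & B_2\\ B_1^T & B_2^T & D\end{bmatrix}.
\]
The lower-right block is $D=D_1+D_2-L(Y)+I$. This holds because degrees and edges inside $Y$ are counted in both $G_1$ and $G_2$, and fullness of $Y$ guarantees that $L(Y)$ is exactly the doubly counted part. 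Taking the Schur complement of the block-diagonal invertible matrix $\mathrm{diag}(A_1+I,A_2+I)$ gives
\[
N(G,F)=\det(A_1+I)\det(A_2+I)\det\bigl(D-B_1^T(A_1+I)^{-1}B_1-B_2^T(A_2+I)^{-1}B_2\bigr).
\]
The last factor is $\det(\DN(G_1,Y,G_2,1))$ by the definitions. This is exactly the cited gluing formula of \cite{reshetikhin2015, contreras2024}, so we may simply invoke it.

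\textbf{Step 2: formula for each piece.} For each $i$, the Schur complement formula applied to $L(G_i)+I$ gives
\[
N(G_i,F)=\det(A_i+I)\det(\DN(G_i,Y,1)).
\]
Here $\det(\DN(G_i,Y,1))\neq 0$ because it is the Schur complement of a positive definite matrix.

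\textbf{Step 3: divide.} Solving Step 2 for $\det(A_1+I)\det(A_2+I)$ gives
\[
\det(A_1+I)\det(A_2+I)=\frac{N(G_1,F)\,N(G_2,F)}{\det(\DN(G_1,Y,1))\det(\DN(G_2,Y,1))}.
\]
Substituting this into Step 1 yields the claimed identity.

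The main obstacle is Step 1's bookkeeping. One must correctly identify the $Y$-block of $L(G)+I$ as $D_1+D_2-L(Y)+I$ and not double the identity. This is where fullness of $Y$ and the $-L(Y)$ correction, with the $+I$ inside $L(Y)+I$, enter; the definition in Section \ref{subsubsec:DtoN} must be matched to this convention.
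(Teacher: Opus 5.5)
Your proof is correct and follows the same route as the paper: apply the massive gluing formula at $m=1$, use the Schur complement identity $N(G_i,F)=\det(A_i+I)\det(\DN(G_i,Y,1))$ for each piece, and divide. The only difference is that you verify the $m=1$ gluing formula directly from the block structure of $L(G)+I$ (the same structure used in Proposition \ref{prop: gluing DN maps}) rather than citing it from the locality-principle literature, which makes your argument self-contained.
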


\section{Auxiliary Results and Tools}\label{Section: proof tools}
In this section,  we discuss auxiliary results and tools needed for proofs of main results.  Since the Dirichlet-to-Neumann map plays an important role in the proof,  we discuss this map in detail, which can be of independent interest on its own. 

\subsection{Dirichlet-to-Neumann Map}\label{subsubsec:DtoN} 
Let $G$ be a graph with $V(G) = \{v_1,\ldots,v_r, v_{r+1},\ldots v_n\}$. Let $F_{G}$ denote the set of 
real valued functions on $V(G)$. Thus, it is the vector space of dimension $|V(G)|$. Let $Y$ be a full subgraph of $G$ and assume that $V(Y) =\{ v_{r+1}, \ldots, v_{r+k}\}$ with $n=r+k$. Let $C(G,\lambda)= L(G) + \lambda I$ be the characteristics matrix of $L(G)$. The graph version of the Dirichlet Boundary Value Problem (DBVP) for $C(G,\lambda)$ can be formulated as follows. For a fixed $\eta\in F_{Y}$, the problem is to find $\phi \in F_{G}$ such that 
\begin{eqnarray} 
\phi(v)&=& \eta(v) \;\;\mr{for\;all}\; v\in V(Y),\label{eqon:Dirichlet eq1}
\\ \label{eqon:Dirichlet eq2}
C(G,\lambda) (\phi)(v)&=&0 \;\;\mr{for\;all}\; v\in V(G)\setminus V(Y).
\end{eqnarray} 
It is well known that for $\lambda\geq 0$, the solution to DBVP (\ref{eqon:Dirichlet eq1})--(\ref{eqon:Dirichlet eq2}) exists and is unique \cite{Curtis2000}. In fact, the uniqueness of the solution can be used to construct the solution as follows. Let us identify $\phi \in F_{G}$ with the vector $\boldsymbol{\phi}= [\phi_1, \ldots, \phi_n]^{T}$, and $\eta\in F_{Y}$ with the vector $\boldsymbol{\eta}=[\eta_{r+1}, \ldots , \eta_n]^{T}$. Let us write the $C(G,\lambda)$ as a block matrix according to vertices of $G$ not belonging to $Y$ (``bulk vertices'') and belonging to $Y$ (``boundary vertices''):
\begin{equation}\label{block matrix represetation of characteristic matrix}
    C(G,\lambda) = \begin{bmatrix} 
    A(\lambda) & B \\[1mm] 
     B^{T} & D(\lambda)
   \end{bmatrix}
\end{equation} 

\noindent
From the uniqueness of the solution to (\ref{eqon:Dirichlet eq1})--(\ref{eqon:Dirichlet eq2}), it follows that $A(\lambda)$ is invertible. Let
$\boldsymbol{\phi}=
\begin{bmatrix}
-A(\lambda)^{-1}B\boldsymbol{\eta} &
\boldsymbol{\eta}
\end{bmatrix}^{T}$ 
Then, 
\begin{equation}\label{eqon:DN map}
    C(G,\lambda)\boldsymbol{\phi} = \begin{bmatrix}
\boldsymbol{0}\\
(D-B^{T}A(\lambda)^{-1}B)\boldsymbol{\eta}
\end{bmatrix}.
\end{equation} Let $\phi$ be the function represented by the vector $\boldsymbol{\phi}$, then we have just shown that $\phi$ is indeed the solution of the DBVP (\ref{eqon:Dirichlet eq1})--(\ref{eqon:Dirichlet eq2}). 

\begin{remark}
    For the existence and uniqueness of the solution of the system DBVP (\ref{eqon:Dirichlet eq1})--(\ref{eqon:Dirichlet eq2}) it is sufficient that $A(\lambda)$ is invertible and this is the case as long as $\lambda$ is not an eigenvalue of $A$. 
\end{remark}

From equation (\ref{eqon:DN map}),  it follows that any $\eta \in F_Y$ can be mapped to an element in $F_Y$ represented by $(D-B^{T}A(\lambda)^{-1}B)\boldsymbol{\eta}$. This allows us to define the Dirichlet-to-Neumann map. 

\begin{definition}[Dirichlet-to-Neumann Map] Assume that $A(\lambda)$ is invertible.  The map 
 \begin{equation}\label{eqon:DN Map}
\mathrm{DN}(G,Y,\lambda): F_Y \to F_Y, \quad \eta \mapsto \eta^{\prime},
\end{equation}
where $\eta^{\prime}\in F_Y$ representing $(D-B^{T}A(\lambda)^{-1}B)\boldsymbol{\eta}$ is called the \emph{Dirichlet-to-Neumann} map of $Y$  in $G$ associated with $C(G,\lambda)$.  When $\lambda = 0$, we denote the Dirichlet-to-Neumann map by simply $\mathrm{DN}(G,Y)$. 
\end{definition}

The matrix representation of $\mathrm{DN}(G,Y,\lambda)$ is given by $D-B^{T}A(\lambda)^{-1}B$.  By abusing the notation,  we will 
denote $D-B^{T}A(\lambda)^{-1}B$ by $\mathrm{DN}(G,Y,\lambda)$. 
Note that $D-B^{T}A^{-1}B$ is the Schur complement of $A$ in $C(G,\lambda)$.

Assume that $\lambda>0$. Then, $C(G,\lambda)$ is invertible and it follows that $\mathrm{DN}(G,Y,\lambda)$ is also invertible \cite{Horn2012}. Let, 
\[
C(G,\lambda)^{-1} = 
    \begin{bmatrix}
        \widehat{A}& \widehat{B} \\[1mm]
         \widehat{B}^{T}& \widehat{D}
    \end{bmatrix}
\]
Using the Schur complement formula to compute the inverse of a $2\times 2$ block matrix \cite{hager1989, Horn2012}, we have the following statement.
\begin{lemma} Assume that $\lambda>0$. Then, the component $\hat{D}$ is the inverse of Dirichlet-to-Neumann map: 
\begin{equation} \label{eqn: inverse of dirichlet-to-neumann}
\widehat{D} = \mathrm{DN}(G,Y,\lambda)^{-1}
\end{equation}
\end{lemma}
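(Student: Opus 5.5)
The plan is to write down the block inverse of $C(G,\lambda)$ explicitly and read off its lower-right block. For $\lambda>0$, $C(G,\lambda)=L(G)+\lambda I$ is symmetric positive definite, because $L(G)$ is positive semidefinite. Hence every principal submatrix is positive definite, and in particular $A(\lambda)$ is invertible. This justifies that $\mathrm{DN}(G,Y,\lambda)=D(\lambda)-B^{T}A(\lambda)^{-1}B$ is well defined; here the $D$ in the definition should be read as $D(\lambda)$, the boundary block of $C(G,\lambda)$.

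The key step is the standard block $LDU$ factorization. Writing $S=\mathrm{DN}(G,Y,\lambda)$, we have
\[
C(G,\lambda)=\begin{bmatrix} I & 0\\ B^{T}A(\lambda)^{-1} & I\end{bmatrix}\begin{bmatrix} A(\lambda) & 0\\ 0 & S\end{bmatrix}\begin{bmatrix} I & A(\lambda)^{-1}B\\ 0 & I\end{bmatrix},
\]
which one checks by multiplying out the three factors. Taking determinants gives $\det C(G,\lambda)=\det A(\lambda)\det S$. Since the left-hand side is nonzero, $S$ is invertible, which is the claim the paper cites from \cite{Horn2012}.

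Next I invert each factor. The two unipotent triangular factors invert by negating their off-diagonal blocks, and the middle factor inverts blockwise. Multiplying the inverses in reverse order gives
\[
C(G,\lambda)^{-1}=\begin{bmatrix} A(\lambda)^{-1}+A(\lambda)^{-1}BS^{-1}B^{T}A(\lambda)^{-1} & -A(\lambda)^{-1}BS^{-1}\\ -S^{-1}B^{T}A(\lambda)^{-1} & S^{-1}\end{bmatrix}.
\]
Comparing with the block decomposition of $C(G,\lambda)^{-1}$ in the statement yields $\widehat{D}=S^{-1}=\mathrm{DN}(G,Y,\lambda)^{-1}$.

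As a faster alternative, I could solve $C(G,\lambda)X=\begin{bmatrix}0\\ I\end{bmatrix}$ directly. The top block row gives $X_1=-A(\lambda)^{-1}BX_2$, and substituting into the bottom row gives $SX_2=I$. So $X_2$, which is the $\widehat D$ column block of the inverse, equals $S^{-1}$.

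There is no real obstacle here. The only point needing care is justifying that $A(\lambda)$ and $S$ are invertible, and positive definiteness of $C(G,\lambda)$ for $\lambda>0$ handles both.
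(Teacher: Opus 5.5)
Your proposal is correct and takes essentially the same approach as the paper, which simply cites the standard Schur-complement formula for the inverse of a $2\times 2$ block matrix. You derive that formula explicitly from the block $LDU$ factorization, and you correctly justify the invertibility of $A(\lambda)$ and of $\mathrm{DN}(G,Y,\lambda)$ from the positive definiteness of $C(G,\lambda)$ for $\lambda>0$.
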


\begin{remark}
    In \cite{contreras2024}, the equation (\ref{eqn: inverse of dirichlet-to-neumann}) has been used to define the Dirichlet-to-Neumann map on a graph, i.e., $\mathrm{DN}(G,Y,\lambda) = \widehat{D}^{-1}$. 
\end{remark}

\noindent
Let us explore some properties of Dirichlet-to-Neumann map. Consider the matrix $A=A(\lambda)$ that appeared in the block-decomposition of $C(G,\lambda)$. For $\lambda$ in a small neighborhood of $0$, $A(\lambda)$ is invertible. From this observation, we can immediately show the following. 

\begin{lemma} \label{lemma:continuity of DN map}
The matrix valued function 
$\lambda \mapsto \mathrm{DN}(G,Y,\lambda)$ 
is continuous in a neighborhood of $0$. In particular, 
\[
\mathrm{DN}(G,Y) = \lim\limits_{\lambda\to 0}\,\,\mathrm{DN}(G,Y,\lambda).
\]
\end{lemma}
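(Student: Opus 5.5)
We need to prove Lemma \ref{lemma:continuity of DN map}: the map $\lambda\mapsto \mathrm{DN}(G,Y,\lambda)=D(\lambda)-B^{T}A(\lambda)^{-1}B$ is continuous near $0$, and its value at $0$ is $\mathrm{DN}(G,Y)$.

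The plan is to write each ingredient explicitly as a function of $\lambda$ and check that each is continuous. From the block decomposition (\ref{block matrix represetation of characteristic matrix}) of $C(G,\lambda)=L(G)+\lambda I$ we have $A(\lambda)=A+\lambda I_r$ and $D(\lambda)=D+\lambda I_k$. The off-diagonal block $B$ does not depend on $\lambda$. Both $A(\lambda)$ and $D(\lambda)$ are affine, hence continuous, in $\lambda$.

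The key step is the invertibility of $A(\lambda)$ on a neighbourhood of $0$. The matrix $A=A(0)$ is the principal submatrix of $L(G)$ on the bulk vertices $V(G)\setminus V(Y)$. Since $G$ is connected and $V(Y)\neq\emptyset$, $A$ is invertible. One justification is Theorem \ref{Thm:Chaiken}: $\det A$ counts rooted spanning forests with roots in $V(Y)$, and this count is positive because a spanning tree of $G$ can be cut into such a forest. Alternatively, one can use uniqueness for the DBVP at $\lambda=0$, as noted in the text. Since $\det A(\lambda)=\det(A+\lambda I)$ is a polynomial in $\lambda$ that is nonzero at $\lambda=0$, it is nonzero on some interval $(-\varepsilon,\varepsilon)$. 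In fact one can take $\varepsilon$ to be the smallest eigenvalue of the positive definite matrix $A$.

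On that interval, Cramer's rule gives $A(\lambda)^{-1}=\operatorname{adj}(A(\lambda))/\det A(\lambda)$. Its entries are rational functions of $\lambda$ whose denominators do not vanish, so $\lambda\mapsto A(\lambda)^{-1}$ is continuous, and even real-analytic. Products and sums of continuous matrix-valued functions are continuous, so $\mathrm{DN}(G,Y,\lambda)$ is continuous on $(-\varepsilon,\varepsilon)$. Evaluating at $\lambda=0$ gives $D-B^{T}A^{-1}B=\mathrm{DN}(G,Y)$, and this yields the limit statement.

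The only point that needs care is the invertibility of $A$ at $\lambda=0$, where the hypotheses that $G$ is connected and $Y$ is nonempty are used. The rest is routine.
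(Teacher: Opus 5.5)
Your proof is correct and follows the same route as the paper. The paper simply observes that $A(\lambda)$ is invertible for $\lambda$ near $0$ and calls continuity of $D(\lambda)-B^{T}A(\lambda)^{-1}B$ immediate; you fill in the details it leaves implicit, namely why $\det A\neq 0$ (via positive definiteness or Chaiken's theorem) and continuity of $A(\lambda)^{-1}$ via Cramer's rule.
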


\noindent
Let us recall the following well-known result relating the rank of $C(G,\lambda)$, rank of $A$, and $\DN(G,Y,\lambda)$. 
\begin{lemma}[\cite{Matsaglia1974}] \label{lemma:rank additivity}
The following holds:
    \begin{equation} \label{eqon: rank equation}
        \mathrm{rank}(C(G,\lambda)) = \mathrm{rank}(A(\lambda)) + \mathrm{rank}(\DN(G,Y,\lambda)) . 
    \end{equation}
\end{lemma}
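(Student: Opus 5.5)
The statement is the rank additivity for the block matrix $C(G,\lambda)$ (Lemma \ref{lemma:rank additivity}). It holds whenever $A(\lambda)$ is invertible, which is the standing hypothesis under which $\DN(G,Y,\lambda)$ is defined. My plan is to factor $C(G,\lambda)$ by the standard block $LDU$ (Schur complement) decomposition and use that multiplication by invertible matrices preserves rank.

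\textbf{Step 1 (factorization).} Write $A=A(\lambda)$ and $D=D(\lambda)$. Since $A$ is invertible, one checks directly by block multiplication that
\[
C(G,\lambda)=\begin{bmatrix} I & 0\\ B^{T}A^{-1} & I\end{bmatrix}
\begin{bmatrix} A & 0\\ 0 & D-B^{T}A^{-1}B\end{bmatrix}
\begin{bmatrix} I & A^{-1}B\\ 0 & I\end{bmatrix}.
\]
The middle lower-right block is exactly $\DN(G,Y,\lambda)$ by the definition in Section \ref{subsubsec:DtoN}.

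\textbf{Step 2 (rank).} The outer factors are block unitriangular, so each has determinant $1$ and is invertible. Hence
\[
\mathrm{rank}(C(G,\lambda))=\mathrm{rank}\begin{bmatrix} A & 0\\ 0 & \DN(G,Y,\lambda)\end{bmatrix}.
\]
The rank of a block-diagonal matrix is the sum of the ranks of its diagonal blocks, because its column space is the direct sum of the two blocks' column spaces sitting in complementary coordinates. This gives $\mathrm{rank}(A(\lambda))+\mathrm{rank}(\DN(G,Y,\lambda))$.

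There is no real obstacle. The only point needing care is to state the hypothesis that $A(\lambda)$ is invertible. This holds for $\lambda$ not an eigenvalue of $A$, including a punctured neighborhood of $0$ and also $\lambda=0$ itself, since the DBVP is uniquely solvable for $\lambda\ge 0$. For general (possibly singular) $A$, the result of \cite{Matsaglia1974} uses generalized inverses, but that case is not needed here. A useful consequence for the later perturbation argument: since $A(\lambda)$ has full rank $r$, the nullity of $C(G,\lambda)$ equals the nullity of $\DN(G,Y,\lambda)$. At $\lambda=0$ with $G$ connected, this gives $\dim\ker \DN(G,Y)=1$.
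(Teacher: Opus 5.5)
Your argument is correct. The paper gives no proof of this lemma; it only cites Marsaglia and Styan, whose rank formula allows a singular leading block by using generalized inverses, with correction terms that vanish when that block is nonsingular. Your block $LDU$ factorization is exactly the nonsingular case (Guttman's rank additivity formula). That is all the paper needs, since $\DN(G,Y,\lambda)$ is only defined when $A(\lambda)$ is invertible.

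Your self-contained version has three advantages over the bare citation. It states the invertibility hypothesis that the lemma omits. It makes clear that the lower-right block must be $D(\lambda)-B^{T}A(\lambda)^{-1}B$, not $D-B^{T}A(\lambda)^{-1}B$ as the paper writes at one point. And the factorization gives the kernel correspondence $\eta\mapsto\bigl(-A(\lambda)^{-1}B\eta,\ \eta\bigr)$ explicitly, which yields directly the restriction isomorphism between the null spaces in the paper's subsequent corollary on $\ker\DN(G,Y)$.

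One small slip, inherited from the paper's remark: $A(\lambda)=A(0)+\lambda I$ is singular exactly when $-\lambda$, not $\lambda$, is an eigenvalue of $A(0)$. Since $A(0)$ is positive definite for connected $G$ and nonempty $Y$, $A(\lambda)$ is invertible for all $\lambda>-\lambda_{\min}(A(0))$. In particular it is invertible on a full neighborhood of $0$, which is what you actually use.
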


\begin{corollary} \label{corollary: kernel of DN map}
The null space of $\DN(G,Y)$ is one dimensional. Consequently, the restriction map $R: F_G \to F_Y$,  $\phi \mapsto \phi|_{Y}$ induces a vector space isomorphism from the null space of $L(G)$ to the null space of $\DN(G,Y)$. 
\begin{proof} Assume that $F_G$ is $n$-dimensional and $F_Y$ is $k$-dimensional. 
Recall that $C(G,\lambda)\big|_{\lambda =0}$ is the Laplacian on $G$ and hence it has one dimensional null space. Thus, its rank is $n-1$.

\noindent
Since $A$ is invertible, $\mathrm{rank}(A) = n-k$.  From Lemma \ref{lemma:rank additivity}, it follows that $\mathrm{rank}(\DN(G,Y)) = k-1$. This means the null space of $\DN(G,Y)$ is one dimensional.  

\noindent
We know that the null space of $L(G)$ consists of the constant functions on $V(G)$ i.e. the null space is $\mathrm{Span}\{\boldsymbol{1}\}$, where $\boldsymbol{1}$ is the constant function that assigns $1$ to each vertex of $G$. Now, $L(G)\boldsymbol{1} = \boldsymbol{0}$ implies that $\DN(G,Y)R(\boldsymbol{1}) =\boldsymbol{0}$. This means the null space of $\DN(G,Y)$ contains constant functions on $Y$. Thus, by the previous paragraph, the null space  of $\DN(G,Y)$ consists of constant functions. Moreover, the map $R: F_G \to F_Y$ induces a non-trivial linear map from the null space of $L(G)$ to the null space of $\DN(G,Y)$. Since both spaces are one dimensional, this is a vector space isomorphism.  
\end{proof}
\end{corollary}

It turns out that $\DN(G,Y)$ is a weighted Laplacian \cite{Zhou2020, Devriendt2022}.  For the sake of completeness,  we sketch a proof of this in the following proposition.

\begin{proposition}\label{prop: DN is a weighted Laplacian}
The map $\DN(G,Y)$ is a weighted graph Laplacian on $S(V(Y))$, where $S(V(Y))$ is the Schur complement  graph induced by $V(Y)$. 
\begin{proof}
Since $A$ is positive definite and $L(G)$ is positive semi-definite,  it follows that $\DN(G,Y)$ is positive semi-definite \cite{Gallier2011}.  From corollary  \ref{corollary: kernel of DN map},  it follows that the Null space of $\DN(G,Y)$ is spanned by the vector $\boldsymbol{1}$. To show it is indeed a Laplacian, we only need to show that the off diagonal entries of $\DN(G,Y)$ are non-positive.  Using the sequential property of Schur complement \cite{zhang2006}, it suffices to assume $|V(Y)|= |V(G)|-1$. In this case,  it can be checked that the off diagonal entries are non-positive $\DN(G,Y)$ by a direct computation \cite{Devriendt2022}. This proves that $\DN(G,Y)$ is indeed a weighted graph Laplacian on $S(V(Y))$. 
\end{proof}
\end{proposition}

\subsubsection{Gluing formula for Dirichlet-to-Neumann maps} \label{subsection: gluing DN maps}
In this sub-section,  we establish  a gluing relation for the Dirichlet-to-Neumann map.  Let $G_1$ and $G_2$ be two graphs and $G = G_1 \cup_{Y} G_2$, where $Y$ is a full subgraph of $G_i,\, i=1,2$. Then, we have three Dirichlet-to-Neumann maps: $\DN(G,Y,\lambda)$, $\DN(G_1,Y,\lambda)$, and $\DN(G_2,Y,\lambda)$. The  gluing relation between these Dirichlet-to-Neumann maps are discussed in Proposition \ref{prop: gluing DN maps}.  The relation (\ref{eqon: gluing formula for the massive dirichlet-to-neumann map}) is established in \cite{contreras2024},  but the proof given here is different and direct; and the relation (\ref{eqon: gluing formula for the dirichlet-to-neumann map}) immediately follows from (\ref{eqon: gluing formula for the massive dirichlet-to-neumann map}).

\begin{proposition}[Gluing formula for the Dirichlet-to-Neumann map] \label{prop: gluing DN maps}
Assume that $\lambda >0$ or $\lambda$ is in a small neighborhood of $0$.  Let us define $\DN(G_1,Y,G_2,\lambda)=\DN(G_1,Y,\lambda)+\DN(G_2,Y,\lambda)- C(Y,\lambda). $ Then, the following holds: 
 \begin{equation}\label{eqon: gluing formula for the massive dirichlet-to-neumann map}
 \DN(G,Y,\lambda) =  \DN(G_1,Y, G_2,\lambda).
 \end{equation}
 In particular, by letting $\lambda\to 0$,  we have:
  \begin{equation}\label{eqon: gluing formula for the dirichlet-to-neumann map}
  \DN(G,Y) = \DN(G_1,Y, G_2).
 \end{equation}
 \begin{proof} The proof immediately follows from identifying the relation between $C(G,\lambda)$, $C(G_i,\lambda)$ along $Y$. More precisely, let 
 \[C(G_i,\lambda) 
 = \begin{bmatrix}
      A_i(\lambda) & B_i \\[1mm]
         B_i^{T} &  D_i(\lambda)
 \end{bmatrix}.
\] 
Similarly, let 
 \[
 C(G,\lambda) 
 = \begin{bmatrix}
      A(\lambda) & B \\[1mm]
         B^{T}& D(\lambda)
 \end{bmatrix}. 
 \] 
Then,  we can check that
\[
A(\lambda) = 
\begin{bmatrix} 
A_1(\lambda) & 0 \\
0 & A_2(\lambda)
\end{bmatrix}, \quad
 B = 
 \begin{bmatrix}
 B_1 \\ B_2
\end{bmatrix}, 
\] and 
  \begin{equation*}
      D(\lambda) = D_1(\lambda) + D_2(\lambda) - C(Y,\lambda). 
  \end{equation*} Hence,
 \begin{align*}
 \DN(G,Y,\lambda)& = D(\lambda) - B^{T}A(\lambda)^{-1}B\\
 & = [D_1(\lambda)-B_1^{T}A_1(\lambda)^{-1}B_1] +  [D_2(\lambda)-B_2^{T}A_2(\lambda)^{-1}B_2] - C(Y,\lambda)\\
 & = \DN(G_1,Y,\lambda)+\DN(G_2,Y,\lambda)- C(Y,\lambda).
 \end{align*} 
  
 \end{proof} 
\end{proposition}

\subsubsection{Pseudo-determinant of the Dirichlet-to-Neumann Map} 
Let us consider the same set up as in Subsection \ref{subsection: gluing DN maps}.  It is not true that $\det(\DN(G_1,Y,G_2,\lambda)) \ne \det(\DN(G_1,Y,G_2) + \lambda I)$ in general.   But it is still possible to relate the derivative of $\det(\DN(G_1,Y,G_2,\lambda)) $ with respect to $\lambda$ at $\lambda =0$ to the pseudo-determinant  of $\DN(G_1,Y,G_2)$.

We next prove the following result which plays the key role in the proof of main theorems.  This theorem can itself be of independent interest. 
\begin{theorem} \label{Thm: pesudo-determinant of DN Map}
The following holds: 
\[
 \dfrac{d}{d\lambda}\det(\DN(G_1,Y,G_2,\lambda))\bigg|_{\lambda=0} = \dfrac{|V(G)|}{|V(Y)|} {\det}^{\prime}(\DN(G_1,Y,G_2))
\]
 \end{theorem}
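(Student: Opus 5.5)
The plan is to work directly with $M(\lambda):=\DN(G,Y,\lambda)$. By Proposition \ref{prop: gluing DN maps}, this equals $\DN(G_1,Y,G_2,\lambda)$ for $\lambda$ near $0$, and at $\lambda=0$ it equals $\DN(G_1,Y,G_2)$. So it suffices to prove
\[
\frac{d}{d\lambda}\det M(\lambda)\Big|_{\lambda=0}=\frac{|V(G)|}{|V(Y)|}{\det}'(M(0)).
\]
Write $n=|V(G)|$ and $k=|V(Y)|$, and use the block decomposition (\ref{block matrix represetation of characteristic matrix}) of $C(G,\lambda)$. There $A(\lambda)=A+\lambda I$, $D(\lambda)=D+\lambda I$, and $B$ does not depend on $\lambda$. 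Since $A(\lambda)$ is invertible near $0$, $M(\lambda)=D+\lambda I-B^{T}(A+\lambda I)^{-1}B$ is smooth there. Differentiating gives
\[
M'(0)=I+B^{T}A^{-1}A^{-1}B .
\]

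\textbf{Step 1 (Jacobi's formula).} By Jacobi's formula,
\[
\frac{d}{d\lambda}\det M(\lambda)\Big|_{0}=\mathrm{tr}\big(\mathrm{adj}(M(0))\,M'(0)\big).
\]

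\textbf{Step 2 (the adjugate).} By Proposition \ref{prop: DN is a weighted Laplacian} and Corollary \ref{corollary: kernel of DN map}, $M(0)=\DN(G,Y)$ is symmetric positive semidefinite of rank $k-1$, with null space spanned by the constant vector $\boldsymbol{1}_Y$. Diagonalize $M(0)$ orthogonally and take $u=\boldsymbol{1}_Y/\sqrt{k}$ as the unit null vector. For a symmetric matrix of corank one, the adjugate is the product of the nonzero eigenvalues times $uu^{T}$. Hence
\[
\mathrm{adj}(M(0))=\frac{{\det}'(M(0))}{k}\,\boldsymbol{1}_Y\boldsymbol{1}_Y^{T}.
\]
This is the step requiring the most care: it uses both the exact kernel identification and the diagonalization. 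Combining with Step 1, the derivative equals
\[
\frac{{\det}'(M(0))}{k}\,\boldsymbol{1}_Y^{T}M'(0)\boldsymbol{1}_Y .
\]

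\textbf{Step 3 (evaluating the quadratic form).} We have
\[
\boldsymbol{1}_Y^{T}M'(0)\boldsymbol{1}_Y=k+\|A^{-1}B\boldsymbol{1}_Y\|^2 .
\]
Since $L(G)\boldsymbol{1}=0$, the bulk rows give $A\boldsymbol{1}_{\mathrm{bulk}}+B\boldsymbol{1}_Y=0$, so $-A^{-1}B\boldsymbol{1}_Y=\boldsymbol{1}_{\mathrm{bulk}}$. In other words, the harmonic extension of a constant is that constant. Therefore $\|A^{-1}B\boldsymbol{1}_Y\|^2=n-k$, and the quadratic form equals $n$.

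Putting the steps together, the derivative is $\frac{n}{k}\,{\det}'(M(0))$, which is the claim.
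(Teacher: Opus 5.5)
Your proof is correct, and it takes a genuinely different route from the paper's.

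The paper argues spectrally. It identifies $\DN(G_1,Y,G_2,\lambda)^{-1}$ with the $Y$-block $\widehat{D}$ of the resolvent $C(G,\lambda)^{-1}$. It then expands that resolvent in an eigenbasis of $L(G)$ to isolate the pole $\frac{1}{\lambda}R(\psi_1)R(\psi_1)^{T}$ contributed by the constant mode. From this it deduces $\mu_1(\lambda)=\lambda/a_1(\lambda)^2+O(\lambda^2)$ for the lowest eigenvalue, with $a_1^2=|V(Y)|/|V(G)|$. Finally it differentiates $\prod_j\mu_j(\lambda)$, using continuity of the perturbed eigenvalues and eigenvectors.

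You instead combine Jacobi's formula with the closed form $\mathrm{adj}(M(0))=\frac{{\det}'(M(0))}{k}\boldsymbol{1}_Y\boldsymbol{1}_Y^{T}$ for a symmetric corank-one matrix with known kernel. This reduces everything to the single scalar $\boldsymbol{1}_Y^{T}M'(0)\boldsymbol{1}_Y$. You evaluate that scalar exactly from $M'(0)=I+B^{T}A^{-2}B$ and the fact that the harmonic extension of a constant is constant.

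Your argument is exact and purely algebraic, and it avoids several things the paper relies on:
\begin{itemize}
\item it needs no eigenvector perturbation theory;
\item it never uses $\widehat{D}=\DN(G,Y,\lambda)^{-1}$, so it needs no invertibility of $C(G,\lambda)$;
\item it needs no ordering or simplicity assumptions on spectra. The paper's write-up posits $0=\sigma_1<\sigma_2<\cdots<\sigma_n$, which fails e.g.\ for $K_n$, although only simplicity of the zero eigenvalue is really used.
\end{itemize}
Your only structural input is that the kernel of $\DN(G,Y)$ is spanned by $\boldsymbol{1}_Y$ (Corollary~\ref{corollary: kernel of DN map}). It also makes the factor $|V(G)|/|V(Y)|$ transparent as $\|\boldsymbol{1}_{V(G)}\|^2/\|\boldsymbol{1}_Y\|^2$. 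Equivalently, first-order perturbation of the simple eigenvalue $0$ gives $\mu_1'(0)=n/k$, which is the paper's asymptotics recovered in one line.

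What the paper's route offers in exchange is an argument modeled on the BFK gluing proof for zeta-regularized determinants. There, adjugates and Jacobi's formula are not available. In the present finite-dimensional setting, your route is the cleaner one.
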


\noindent
A proof of Theorem \ref{Thm: pesudo-determinant of DN Map},  given here,  uses the perturbation theory of Laplacian \cite{kato2013}.  Since the computation of the derivative is local,  it suffices to assume $\lambda$ is in a small neighborhood of $0$.  This proof is inspired by the proof of BFK gluing formula for the zeta regularized determinant of the Laplacian on compact Riemannian manifolds \cite{burghelea1992}. 

We will first prove a few lemmas that are needed to prove this theorem. 

\begin{lemma}\label{lemma: Representation of inverse of DN map}
Assume that $\lambda \ne 0$ such that $C(G,\lambda)$ is invertible and $|V(G)|=n$. Let $\psi_1, \ldots, \psi_{n}$ be an orthonormal eigenbasis of $L(G)$ associated with the eigenvalues $0 = \sigma_1< \sigma_2< \ldots < \sigma_n$. Then,
\begin{equation}
    \DN(G_1,Y,G_2,\lambda)^{-1}\eta = \dfrac{1}{\lambda} \langle\eta\,,\, R(\psi_1)\rangle R(\psi_1)+ \sum_{j=2}^{n} \dfrac{1}{\sigma_j+\lambda}\langle\eta\,,\, R(\psi_j)\rangle R(\psi_j)
\end{equation}
    \begin{proof} From the assumption, it follows that the eigenvalues of $C(G,\lambda)^{-1}$ are $1/\lambda, 1/(\sigma_2+\lambda), \ldots, 1/(\sigma_n + \lambda)$ with orthonormal eigenbasis $\psi_1,\ldots, \psi_n$. This means for any $\psi \in F_G$, we have 
    \[
    C(G,\lambda)^{-1}\psi = \dfrac{1}{\lambda} \langle\psi\,, \psi_1 \rangle \psi_1  + \sum_{j=2}^{n} \dfrac{1}{\sigma_j + \lambda} \langle\psi\,, \psi_j \rangle \psi_j
    \]
Now,  the desired result follows from relation (\ref{eqn: inverse of dirichlet-to-neumann}) and Proposition \ref{prop: gluing DN maps}.  
    \end{proof}
\end{lemma}

\noindent
Let $\psi_1, \ldots, \psi_{n}$ and $0 = \sigma_1< \sigma_2< \ldots < \sigma_n$ as in Lemma \ref{lemma: Representation of inverse of DN map}. For fixed $\lambda\ne 0$, let us define $I: F_Y \to F_Y$ and $I^{\prime}: F_Y \to F_Y$ by 

\begin{align}
    \begin{array}{lll}
        I(\eta) = \dfrac{1}{\lambda} \langle\eta\,,\, R(\psi_1)\rangle R(\psi_1) \quad \text{and} \quad 
        I^{\prime}(\eta) = \sum\limits_{j=2}^{n} \dfrac{1}{\sigma_j+\lambda}\langle\eta\,,\, R(\psi_j)\rangle R(\psi_j).
    \end{array}
\end{align} Then for any fixed $\eta\in F_Y$,  $I^{\prime}(\eta) \quad \text{is bounded as} \quad \lambda \to 0$.   Furthermore,  from Lemma \ref{lemma: Representation of inverse of DN map}, it follows that 
\begin{equation}
    \DN(G_1,Y,G_2,\lambda)^{-1} = \dfrac{1}{\lambda} I + I^{\prime}
\end{equation}

\noindent Assume that $|V(Y)| =k$ and let $\mu_1(\lambda) < \mu_2(\lambda) < \ldots < \mu_k(\lambda)$ be eigenvalues and $\phi_1(\lambda), \ldots,\phi_k(\lambda)$ be corresponding orthonormal eigen basis of $\DN(G_1,Y,G_2,\lambda)$.  Let 
\begin{equation} \mu_j = \mu_{j}(\lambda)|_{\lambda=0}\quad \text{and} \quad \phi_j = \phi_j(\lambda)|_{\lambda=0}. 
\end{equation} Then by the perturbation theory of eigenvalues and eigenfunctions \cite{kato2013}, we have the following.

\begin{lemma} The eigenvalues and a corresponding orthonormal eigen basis of $\DN(G_1,Y,G_2)$ are given by $0=\mu_1 <\mu_2 <\ldots < \mu_k$ and $\phi_1,\ldots, \phi_k $ 
 respectively.    
\end{lemma}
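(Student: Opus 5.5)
The plan is to obtain the lemma from continuity of $\lambda\mapsto \DN(G_1,Y,G_2,\lambda)$ near $\lambda=0$, together with the spectral facts already established for $\DN(G,Y)$. By Proposition \ref{prop: gluing DN maps}, $\DN(G_1,Y,G_2,\lambda)=\DN(G,Y,\lambda)$ for $\lambda$ in a small neighborhood of $0$. By Lemma \ref{lemma:continuity of DN map}, this family is continuous there, indeed real-analytic, since $A(\lambda)^{-1}$ is a rational function of $\lambda$ without a pole at $0$. It is also symmetric, because $C(G,\lambda)$ is symmetric. Its limit as $\lambda\to 0$ is $\DN(G,Y)=\DN(G_1,Y,G_2)$.

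\textbf{Eigenvalues.} I would apply Kato's perturbation theory for analytic symmetric families \cite{kato2013}. The eigenvalues of $\DN(G_1,Y,G_2,\lambda)$ can be chosen as analytic (in particular continuous) functions of $\lambda$, with an orthonormal eigenbasis that is also analytic in $\lambda$. Since eigenvalues depend continuously on the matrix entries, the ordered family $\mu_1(\lambda)<\cdots<\mu_k(\lambda)$ converges to the ordered spectrum of the limiting matrix. Hence $\mu_1,\dots,\mu_k$, defined as the values at $\lambda=0$, are exactly the eigenvalues of $\DN(G_1,Y,G_2)$ counted with multiplicity.

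\textbf{Eigenvectors.} Passing to the limit in $\DN(G_1,Y,G_2,\lambda)\phi_j(\lambda)=\mu_j(\lambda)\phi_j(\lambda)$ and in $\langle\phi_i(\lambda),\phi_j(\lambda)\rangle=\delta_{ij}$ gives $\DN(G_1,Y,G_2)\phi_j=\mu_j\phi_j$ with $\{\phi_j\}$ orthonormal. Thus $\phi_1,\dots,\phi_k$ form an orthonormal eigenbasis.

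\textbf{The claim $0=\mu_1<\mu_2$.} By Proposition \ref{prop: DN is a weighted Laplacian}, $\DN(G,Y)$ is positive semidefinite. By Corollary \ref{corollary: kernel of DN map}, its kernel is one-dimensional and spanned by the constants. So $0$ is a simple eigenvalue and the smallest one, giving $\mu_1=0<\mu_2$.

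\textbf{Main obstacle.} The delicate point is strictness among the higher eigenvalues. Continuity only yields $\mu_2\le\cdots\le\mu_k$, and strict inequalities cannot be forced in general, because the limiting matrix may have repeated nonzero eigenvalues. I would handle this by reading the ordering as non-strict for $j\ge 2$, or by assuming simplicity of the spectrum as the paper tacitly does. Kato's analytic selection still supplies orthonormal eigenvectors even at crossings. Only the simplicity of the eigenvalue $0$ matters later, and that is secured by Corollary \ref{corollary: kernel of DN map}.
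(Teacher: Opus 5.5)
Your proposal is correct and follows the paper's route. The paper simply invokes Kato's perturbation theory for the continuous symmetric family $\DN(G_1,Y,G_2,\lambda)=\DN(G,Y,\lambda)$; you spell this out, and you add the positive semidefiniteness of $\DN(G,Y)$ and the one-dimensional kernel from Corollary~\ref{corollary: kernel of DN map} to pin down $0=\mu_1<\mu_2$. Your caveat about strictness is also well founded: by Corollary~\ref{corollary:DNgluingcomplete}, $\DN(K_m\cup_{K_l}K_n,K_l)=\frac{m+n-l}{l}L(K_l)$ has the nonzero eigenvalue $m+n-l$ with multiplicity $l-1$ (repeated once $l\ge 3$), so in general only $0=\mu_1<\mu_2\le\cdots\le\mu_k$ holds, and that is all the proof of Theorem~\ref{Thm: pesudo-determinant of DN Map} actually uses.
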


The following gives information about the behavior of $\mu_1(\lambda)$ as $\lambda \to 0$. 

\begin{lemma} \label{lemma: behavior of perturbation of lowest eigenvalue}
Let $a_1(\lambda) = \langle R(\psi_1), \phi_1(\lambda)\rangle$. Then,
\begin{equation}
    \dfrac{1}{\mu_1(\lambda)} = \dfrac{1}{\lambda} a_1(\lambda)^2 + b_1(\lambda), 
\end{equation} where $b_1(\lambda)$ is bounded as $\lambda \to 0$. Consequently,
\begin{equation}
    \mu_1(\lambda) = \dfrac{\lambda}{a_1(\lambda)^2} + \lambda^2 b_2(\lambda) 
\end{equation} as $\lambda \to 0$ such that $b_2(\lambda)$ is bounded as $\lambda \to 0$.
\begin{proof} We know that 
\[
 \dfrac{1}{\mu_1(\lambda)} = \langle \DN(G_1,Y,G_2,\lambda)^{-1} \phi_1(\lambda), \phi_1(\lambda)\rangle
\] and 
\[
\DN(G_1,Y,G_2,\lambda)^{-1} \phi_1(\lambda) = \dfrac{1}{\lambda} I(\phi_1(\lambda)) + G(\phi_1(\lambda)).  
\]Now using,
\[
I(\phi_1(\lambda)) = a_1(\lambda) R(\psi_1)
\] we get 
\begin{align*}
\dfrac{1}{\mu_1(\lambda)}& = \langle \DN(G_1,Y,G_2,\lambda)^{-1} \phi_1(\lambda), \phi_1(\lambda)\rangle \\
& = \dfrac{1}{\lambda} a_1(\lambda) \langle R(\psi_1, \phi_1(\lambda)\rangle + b_1(\lambda) \\
& = \dfrac{a_1(\lambda)^2}{\lambda} + b_1(\lambda),  
\end{align*} where $b_1(\lambda) = \langle I^{\prime}(\phi_1(\lambda), \phi_1(\lambda) \rangle$. Now, $b_1(\lambda)$ is bounded as $\lambda\to 0$ immediately follows from $I^{\prime}$ is bounded  and $\phi_i(\lambda) \to \phi_1$ as $\lambda \to 0$. The second statement immediately follows from this. 
\end{proof}
\end{lemma}

\noindent
Next we compute $a_1 = a_1(\lambda)|_{\lambda =0}$ explicitly. 

\begin{lemma} The following holds: 
\begin{equation}
    a_1 = \sqrt{\dfrac{|V(Y)|}{|V(G)|}}. 
\end{equation} 
    \begin{proof} We recall from Corollary \ref{corollary: kernel of DN map}, that the restriction map $R$ induces an isomorphism from the null space of $L(G)$ to the null space of $\DN(G_1,Y,G_2)$. Let $\boldsymbol{1}$ be the constant function on $G$ that assigns to each vertex $1$, $\psi_1 = \left\{\dfrac{1}{\sqrt{|V(G)|}}\boldsymbol{1}\right\},$
    $\phi_1=R(\boldsymbol{1})/\sqrt{|V(Y)|}$. Then, $\left\{\psi_1\right\}$ is a basis of the null space of $L(G)$ such that $\langle\psi_1\,, \psi_1\rangle =1$ and $\left\{\phi_1\right\}$ is a basis of the null space of $\DN(G_1,Y,G_2)$ such that $\langle\phi_1\,, \phi_1\rangle =1$. Furthermore, $R(\psi_1) = \frac{\sqrt{|V(Y)|}}{\sqrt{|V(G)|}}\phi_1$
    
    Thus,
    \[
    a_1 = \langle \phi_1\,,\, R(\psi_1)\rangle = \sqrt{\dfrac{|V(Y)|}{|V(G)|}} \langle \phi_1\,,\, \phi_1\rangle = \sqrt{\dfrac{|V(Y)|}{|V(G)|}}. 
    \]
    \end{proof}
\end{lemma}

\begin{proof}[Proof of Theorem \ref{Thm: pesudo-determinant of DN Map}] From Lemma \ref{lemma: behavior of perturbation of lowest eigenvalue}, it follows that 
\[
\det(DN(G_1,Y,G_2,\lambda)) = \prod_{j=1}^{k} \mu_j(\lambda) = \dfrac{\lambda}{a_1(\lambda)^2}\prod_{j=2}^k \mu_j(\lambda) + \lambda^2 b_3(\lambda)
\] such that $b_3(\lambda)$ is bounded as $\lambda\to 0$. This implies that 
\[
 \dfrac{d}{d\lambda}\det(\DN(G_1,Y,G_2,\lambda))\bigg|_{\lambda=0} = \dfrac{1}{a_1^2} \prod_{j=2}^{k}\mu_j = \dfrac{|V(G)|}{|V(Y)|}{\det}^{\prime}(\DN(G_1,Y,G_2)),
\]
 as needed.   
\end{proof}

\subsection{The Locality Principle and Gluing Formula for Determinant}

The main objective of this sub-section is to briefly discuss the locality principle in QFT and how it can be used to derive the gluing formula for determinant of the massive Laplacian $L(G)+m^2$ on a graph $G$. These ideas and results are discussed in \cite{reshetikhin2015, contreras2024} and references therein.

 \subsubsection{The Locality Principle in QFT} 
 
The locality principle in QFT states that a QFT on a space-time is determined by its structure at ``short distances''.   In the path integral approach to QFT,  the key object of interest is the so-called  partition function.  Given a space-time,  a partition function is an integral defined on the space of fields on the space-time.  The locality principle,  in this context,  means if the space-time is cut into smaller pieces,  then the partition on the space-time can be recovered from the partition functions on the smaller pieces. 

The main message here is that if we replace the space time by a graph and we are able to relate a combinatorial object to the partition function of a QFT on a graph,  then,  exploiting the locality principle leads to a gluing formula for the combinatorial object.

\subsubsection{Gluing Formula for the Determinant of Massive Laplacian from the Locality Principle} 
  
Here,  we consider a free massive scalar field theory on a finite graph and briefly illustrate how the locality principle leads to a gluing formula for the determinant of massive Laplacian.  

\paragraph{Partition Function of a Free Massive Scalar Field theory.} Let $G$ be a finite graph.  Then,  for the scalar field theory,  the space of fields is given by $F_G$.  Let $m>0$ and $S_G: F_G \to \mathbb{R}$ be defined by 
\begin{equation}
    S_G(\phi)=\frac{1}{2}\langle\phi,C(G,m^2)\phi\rangle, 
\end{equation}  where $\langle\,, \rangle$ is the canonical inner product on $F_G$ defined by
\begin{equation}
   \langle \phi\,,\,\phi'\rangle = \sum\limits_{v\in V(G)}\phi(v)\phi'(v)
\end{equation}

For the so-called free massive scalar field theory,  the partition function is given by 
 \begin{equation}\label{partition function: closed graph}
Z(G) = \int_{F_G}D\phi\,\, e^{-S_G(\phi)}
\end{equation} where $\D\phi = \prod\limits_{v\in V(G)}\dfrac{d\phi(v)}{\sqrt{2\pi}}$.  Since the integral in (\ref{partition function: closed graph}) is a Gaussian integral,  we have 
\[
Z(G) = \det(C(G,m^2))^{-1/2}.
\]
More generally,  if $Y$ is a subgraph of $G$ and $\eta \in F_Y$,  we define
\begin{equation} \label{partition function: boundary}
    Z(G,Y,\eta) = \int_{\{\phi \in F_G\,:\, \phi|_Y = \eta\}} D\phi\,\, e^{-S_G(\phi)}
\end{equation} where $D\phi = \prod\limits_{v\in V(G)\backslash V(Y)}\dfrac{d\phi(v)}{\sqrt{2\pi}}$.   Let 
\[C(G,m^2) 
 = \begin{bmatrix}
      A(m^2) & B \\[1mm]
         B^{T}& D(m^2)
 \end{bmatrix}. 
 \] Then,  it can be shown that \cite{reshetikhin2015, contreras2024}
\begin{equation}\label{equation: partition function in the presence of boundary}
Z(G,Y,\eta) = \det(A(m^2))^{-1/2}e^{-\frac{1}{2}\langle \DN(G,Y,m^2)\eta, \eta\rangle}. 
\end{equation} 

\paragraph{Schur-Complement formula from Fubini's theorem} 
Using the Fubini's theorem \cite{folland1999} and relation (\ref{equation: partition function in the presence of boundary}), we can immediately establish the Schur complement formula for the determinant of $C(G,m^2)$. 
\begin{proposition} \label{remark: Schur complement formula for determinant of massive Laplacian} The following holds. 
\[
\det(C(G,m^2)) = \det(A(m^2)) \det(\DN(G,Y,m^2)).
\] 
 \begin{proof} Using Fubini's theorem 
 \begin{align*}
    \det(C(G,m^2))^{-1/2}&= Z(G) \\
    & = \int_{F_Y}D\eta Z(G,Y,\eta) \\
        & = \det(A(m^2))^{-1/2}\int_{F_Y}D\eta e^{-\frac{1}{2}\langle \DN(G,Y,m^2)\eta, \eta\rangle}\\
        & = \det(A(m^2))^{-1/2} \det(\DN(G,Y,m^2))^{-1/2}
 \end{align*} Thus,
 \[
\det(C(G,m^2)) = \det(A(m^2)) \det(\DN(G,Y,m^2)).
\] 
     
 \end{proof}   
\end{proposition}

 \paragraph{Gluing Formula for the Determinant of a Massive Laplacian}  Let $G_1$ and $G_2$ be two graphs and $G = G_1 \cup_{Y} G_2$, where $Y$ is a full subgraph of $G_i,\, i=1,2$.  Using the locality principle it can be shown that the following holds \cite{reshetikhin2015, contreras2024}:
  \begin{equation}\label{equation: formal fubini}
 Z(G)=\int_{F_Y}D\eta e^{S_Y(\eta)}Z(G_1,Y,\eta)Z(G_2,Y,\eta).
 \end{equation}
Using notations from Subsection  \ref{subsection: gluing DN maps},  the relation (\ref{equation: formal fubini}) implies the following gluing formula for the determinant of the massive Laplacian  \cite{reshetikhin2015, contreras2024}:
\begin{equation}\label{equation: gluing formula determinant of massive Laplacian}
    \det(C(G,m^2)) = \det(A(G_1,m^2))\det(A(G_2,m^2))\det(\DN(G_1,Y,G_2,m^2)).
\end{equation}

\section{Proof of Main Theorems} 

In this section, we present proofs of the main theorems. 

\subsection{Proof of Theorem \ref{theorem:main theorem-1} and Theorem \ref{theorem: main theorem-2}}\label{subsection: main proofs part-I}
 We will use the same set up as Subsection \ref{subsection: gluing formula for pseudo-determinant} and use the same notation as in \ref{subsection: gluing DN maps}. The key idea of the proof is to use the relation (\ref{equation: gluing formula determinant of massive Laplacian}).  We first prove Theorem \ref{theorem: main theorem-2}. 

\begin{proof}[Proof of Theorem \ref{theorem: main theorem-2}]
Taking $m =1 $ in equation (\ref{equation: gluing formula determinant of massive Laplacian}), we get 
\begin{equation}
    \det(C(G,1)) = \det(A(G_1,Y,1))\det(A(G_2,Y,1))\det(\DN(G_1,Y,G_2,1))
\end{equation}
It is known \cite{chebotarev2006,DD1988} that 
\[
\det(C(G,1)) = N(G,F),
\]
Now, the proof can be completed by using Remark \ref{remark: Schur complement formula for determinant of massive Laplacian}, from which it follows that
\[
\det(A(G_i,Y,1)) = \dfrac{\det(C(G_i,1))}{\det(\DN(G_i,Y,1))}= \dfrac{N({G}_i, F)}{\det(\DN(G_i,Y,1))} 
\]

\end{proof}

 Next we  prove Theorem \ref{theorem:main theorem-1}.  As noted earlier,  the computation of derivative is a local computation.  For that reason,  it suffices to concentrate on a small neighborhood of $0 \in \mathbb{C}$.  We note that for a given square matrix $M$, the function defined on $\mathbb{C}$ by $z\mapsto \det (M + zI)$ is a polynomial function on $\mathbb{C}$.  Thus, the functions $\det(C(G,z))$, $\det(A(G_1,z))$, and $\det(A(G_2,z))$,  are holomorphic functions on $\mathbb{C}$. Note that $A(G_1,z)$ and $A(G_2, z)$ are invertible for all $z$ in a neighborhood of $0\in \mathbb{C}$.   This implies that $z \mapsto \det(\DN(G_1,Y,G_2,z))$ is holomorphic function on a neighborhood of $0$ in $\mathbb{C}$.

\begin{proposition}\label{prop: gluing formula for complex parameter} For all $z$ in a small neighborhood of $0$ in $\mathbb{C}$,  we have 
\begin{equation}\label{eqon: gluing formula complex domain}
     \det(C(G,z)) = \det(A(G_1,Y,z))\det(A(G_2,Y,z))\det(\DN(G_1,Y,G_2,z))
\end{equation} 
\begin{proof} 
We know from (\ref{equation: gluing formula determinant of massive Laplacian}) that the equation (\ref{eqon: gluing formula complex domain}) holds for all $\lambda > 0$. By the identity theorem for holomorphic functions \cite{Conway1978}, it follows that the equation holds for all $z$ in a small neighborhood of $0$ in $\mathbb{C}$. 
\end{proof}
\end{proposition}

\begin{proof}[Proof of Theorem \ref{theorem:main theorem-1}]
We recall that 
\[
{\det}'(L(G)) = \frac{d}{d\lambda}\det(C(G,\lambda))\big|_{\lambda=0}\] By  (\ref{eqon: gluing formula complex domain}), we have
\begin{align*}
\begin{array}{lll}
        \frac{d}{d\lambda}\det(C(G,\lambda))\!\!\!&=\!\frac{d}{d\lambda}\det(A(G_1,Y,\lambda))\det(A(G_2,Y,\lambda))\det(\DN(G_1,Y,G_2,\lambda))\\\\
        &+\det(A(G_1,Y,\lambda))\dfrac{d}{d\lambda}\det(A(G_2,Y,\lambda))\det(\DN(G_1,Y,G_2,\lambda))\\\\
        &+\det(A(G_1,Y,\lambda))\det(A(G_2,Y,\lambda))\dfrac{d}{d\lambda}\det(\DN(G_1,Y,G_2,\lambda)).
\end{array}  
\end{align*}

\noindent
Setting $\lambda=0$,  the first two terms vanish. This implies that
   \[
   {\det}^{\prime}(L(G)) = \det(A(G_1,Y)) \det(A(G_2,Y)) \dfrac{d}{d\lambda}\det(\DN(G_1,Y,G_2,\lambda))\big|_{\lambda=0}
   \]
We know from Theorem \ref{Thm: pesudo-determinant of DN Map} that 
  \[
  \dfrac{d}{d\lambda}\det(\DN(G_1,Y,G_2,\lambda))\bigg|_{\lambda=0} = \dfrac{|V(G)|}{|V(Y)|} {\det}^{\prime}(\DN(G_1,Y,G_2)),
  \] completing the proof. 
\end{proof}

\subsection{Proof of Theorem \ref{theorem:schur complement formula}}
In Subsection \ref{subsection: gluing formula for pseudo-determinant}, we discussed that Theorem \ref{theorem:schur complement formula} is equivalent to the Schur complement  type formula for the number of spanning trees derived in \cite{Zhou2020}. Here, we discuss a direct proof using the argument from Subsection \ref{subsection: main proofs part-I}. Let us write
\[
C(G, \lambda) = \begin{bmatrix}
    A(\lambda) & B \\
    B^T & D(\lambda)
\end{bmatrix}
\] and let $A = A(0)$. 
Then, for $\lambda > 0$, from Proposition \ref{remark: Schur complement formula for determinant of massive Laplacian},  we have 
\begin{equation}
    \det(C(G,\lambda)) = \det(A(\lambda)) \det(\DN(G,Y,\lambda))
\end{equation}
Then, using the same argument used in Subsection \ref{subsection: main proofs part-I}, we can show that 
\begin{equation}
    {\det}'(L(G)) = \det(A) \dfrac{d}{d\lambda}\bigg|_{\lambda=0}\det(\DN(G,Y,\lambda)), 
\end{equation} and using the argument to prove Theorem  \ref{Thm: pesudo-determinant of DN Map}, we can show 
\begin{equation}
    \dfrac{d}{d\lambda}\bigg|_{\lambda=0}\det(\DN(G,Y,\lambda))  =\dfrac{|V(G)|}{|V(Y)|} {\det}'\DN(G,Y). 
\end{equation} Hence,
\[
{\det}'(L(G)) = \det(A) \dfrac{|V(G)|}{|V(Y)|} {\det}'(\DN(G,Y))
\] completing the proof of Theorem  \ref{theorem:schur complement formula}. 

\subsection{Proof of Theorem \ref{cor: main cor-2}}

Now, we prove Theorem \ref{cor: main cor-2}. We know from Theorem \ref{theorem:main theorem-1} 
\begin{equation*}
\tau(G) = \det(A_1)\det(A_2) \dfrac{{\det}^{\prime}(\DN(G_1,Y,G_2))}{|V(Y)|}
\end{equation*} and from Theorem \ref{theorem:schur complement formula}
\begin{equation*}
 \det(A_i) = |V(Y)|\dfrac{\tau(G_i)}{{\det}^{\prime}(\DN(G_i,Y))}
\end{equation*} These immediately implies
\[
\tau(G) = \tau(G_1) \tau(G_2)  C(G_1, G_2, Y). 
\]

%
%
%

\section{Examples} 
In this section, we give examples of explicit computation of Dirichlet-to-Neumann maps. We also apply gluing formula for the number of spanning trees to give closed form formula for the number of spanning trees in two different of families of graphs: generalized core satellite graphs and gluing of cycle graphs.

\subsection{Generalized Core Satellite Graphs}

Let us begin with case where two complete graphs are glued along a complete subgraph. The following lemma is very useful.   
\begin{lemma} \label{lemma:DNcomplete}
Let $l\leq n$ and $G= K_n$.   Let $Y$  be a subgraph of $G$ such that $Y = K_l$. Then,  
\[
\DN(G,Y) = \dfrac{n}{l} L({Y})
\]
\begin{proof} Let us write 
\[
L(G) = \begin{bmatrix}
    A & B \\
    B^{T} & D 
\end{bmatrix}, 
\] where $A = n \mathrm{I}_{n-l} - \boldsymbol{1}_{n-l} \boldsymbol{1}_{n-l}^{T}$, $B = -\boldsymbol{1}_{n-l} \boldsymbol{1}_{l}^{T}$,  and $D = n \mathrm{I}_{l} - \boldsymbol{1}_{l} \boldsymbol{1}_{l}^{T}$.   By Sherman-Morrison formula, 
\[
A^{-1} = \dfrac{1}{n} \mathrm{I}_{n-l} + \dfrac{1}{nl} \boldsymbol{1}_{n-l} \boldsymbol{1}_{n-l}^{T}. 
\] Hence,
\[
B^TA^{-1}B = \dfrac{n-l}{l} \boldsymbol{1}_{l} \boldsymbol{1}_{l}^{T}
\] Now,
\[
\DN(G,Y) = D - B^TA^{-1}B = n \mathrm{I}_{l}- \dfrac{n}{l}  \boldsymbol{1}_{l} \boldsymbol{1}_{l}^{T} = \dfrac{n}{l} (l  \mathrm{I}_{l}-\boldsymbol{1}_{l} \boldsymbol{1}_{l}^{T}) = \dfrac{n}{l} L(Y)
\]  as needed.  
\end{proof}
\end{lemma} 

From Proposition  \ref{prop: gluing DN maps} and Lemma \ref{lemma:DNcomplete},  we immediately have the following.  
\begin{corollary}\label{corollary:DNgluingcomplete} Let $l,m,$ and $n$ be positive integers such that $l \leq \min\{m,n\}$ and $G = K_m \cup_{K_l} K_n$.  Then 
\[
\DN(G, K_l) = \dfrac{m+n-l}{l} L(Y) = \dfrac{|V(G)|}{|V(Y)|} L(Y)
\]
\end{corollary}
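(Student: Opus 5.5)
Corollary \ref{corollary:DNgluingcomplete} follows by combining Proposition \ref{prop: gluing DN maps} with Lemma \ref{lemma:DNcomplete}. The first step is to check the hypotheses of the gluing setup. Inside $G=K_m\cup_{K_l}K_n$ we regard $K_l$ as a subgraph $Y$ of both $G_1=K_m$ and $G_2=K_n$. Since $K_l$ is complete on its vertex set, it is automatically a full subgraph of each complete graph. Hence $G=G_1\cup_Y G_2$ is well defined, and
\[
|V(G)|=m+n-l, \qquad |V(Y)|=l.
\]

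The second step is to apply the identity (\ref{eqon: gluing formula for the dirichlet-to-neumann map}), which in this setting reads
\[
\DN(G,Y)=\DN(K_m,Y)+\DN(K_n,Y)-L(Y).
\]
Here $L(Y)$ is the unweighted Laplacian of $K_l$, that is, $C(Y,0)$.

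The third step substitutes Lemma \ref{lemma:DNcomplete} twice, once with $n$ replaced by $m$ and once as stated. This gives $\DN(K_m,Y)=\frac{m}{l}L(Y)$ and $\DN(K_n,Y)=\frac{n}{l}L(Y)$. Summing,
\[
\DN(G,Y)=\Big(\frac{m}{l}+\frac{n}{l}-1\Big)L(Y)=\frac{m+n-l}{l}\,L(Y),
\]
and the last expression equals $\frac{|V(G)|}{|V(Y)|}L(Y)$ by the vertex counts in the first step.

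There is no real obstacle here; the only point needing care is the edge case $l=m$ or $l=n$. In that case the block $A_i$ is empty, and we adopt the convention that the Dirichlet-to-Neumann map equals $D_i=L(K_l)$. This is consistent with the formula $\frac{l}{l}L(Y)$ given by Lemma \ref{lemma:DNcomplete}. With this convention the gluing computation in the proof of Proposition \ref{prop: gluing DN maps} still goes through, since an empty bulk block contributes nothing to $A$ or $B$.
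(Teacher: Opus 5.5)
Your proposal is correct and follows the paper's route: the paper obtains this corollary directly from the gluing formula for Dirichlet-to-Neumann maps (Proposition \ref{prop: gluing DN maps}) combined with Lemma \ref{lemma:DNcomplete}, giving $\frac{m}{l}+\frac{n}{l}-1=\frac{m+n-l}{l}$ exactly as you compute. Your remark on the degenerate case $l=m$ or $l=n$ (empty bulk block, so the map is just $L(K_l)$) is a small addition the paper does not spell out, and it is consistent with the paper.
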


Now,  we are ready to give a closed form formula for the number of spanning trees on the clique sum $K_m \cup_{K_l} K_n$.

\begin{proposition} \label{prop:clique_sum} 
The number of spanning trees on $G = K_m \cup_{K_l} K_n$ is given by
\[
\tau(G) = l m^{m-l-1}n^{n-l-1} (m+n-l)^{l-1}
\] 
\begin{proof}  From Lemma \ref{lemma:DNcomplete},  we have  
\[
{\det}^{\prime}(\DN(K_m,K_l))= m^{l-1}\quad \text{and}\quad {\det}^{\prime}(\DN(K_n,K_l))= n^{l-1}
\] From Corollary \ref{corollary:DNgluingcomplete},  we have 
\[
{\det}^{\prime}(\DN(G, K_l)) = (m+n-l)^{l-1}
\]Thus,  from Theorem  \ref{cor: main cor-2},  we get 
\begin{align*}
\tau(G) = l \tau(K_m)\tau(K_n) \dfrac{ (m+n-l)^{l-1}}{m^{l-1}n^{l-1}} = l m^{m-l-1}n^{n-l-1} (m+n-l)^{l-1}
\end{align*}
\end{proof} 
\end{proposition}

It turns out that the Lemma \ref{lemma:DNcomplete} can be generalized as follows.  
\begin{proposition} \label{proposition:dnjoin} Let $\tilde{G}$ be a graph and $Y = K_l$.  Let $G = \tilde{G} \vee Y$ be the join of $\tilde{G}$ and $Y$.   Then,
\[
\DN(G, Y)  = \dfrac{|V(G)|}{|V(Y)|} L(Y)
\]
\begin{proof} Let $m = |V(\tilde{G})|$.   Let us write
\[
L(G) = \begin{bmatrix}
            A  & B \\
            B^{T} & D
           \end{bmatrix}
\] We note that $A = L(\tilde{G}) + l \mathrm{I}_m$,   $D = L(Y) + m \mathrm{I}_l$,  $B= -\boldsymbol{1}_m \boldsymbol{1}_l^{T}$.  Since $L(\tilde{G})\boldsymbol{1}_m  =0$,  we have $A\boldsymbol{1}_m  = l \boldsymbol{1}_m $.   Thus, 
\[
\boldsymbol{1}_m^{T} A^{-1}\boldsymbol{1}_m  = l \boldsymbol{1}_m^{T}\boldsymbol{1}_m  = \dfrac{m}{l}
\] Using this we conclude: 
\begin{align*}
\DN(G,Y)  = D - B^{T}A^{-1}B &=
L(Y) + m \mathrm{I}_l - \dfrac{m}{l}  \boldsymbol{1} \boldsymbol{1}_l^{T}  \\
&= L(Y) + \dfrac{m}{l} (l  \mathrm{I}_l - \boldsymbol{1} \boldsymbol{1}_l^{T}) \\
& = L(Y)  + \dfrac{m}{l} L(Y) \\ 
& =  \dfrac{|V(G)|}{|V(Y)|} L(Y)
\end{align*}
\end{proof} 
\end{proposition}

Next,  we explore the number of spanning trees on the so-called generalized core–satellite graph, which is generalization of graph with the same name introduced in \cite{estrada2017}.  

\begin{definition}  \label{definition:coresatellitegraphs}
Let $\tilde{G}_1, \ldots,  \tilde{G}_n$  be connected graphs.  Let $G_i = \tilde{G}_i \vee K_l$, the join of $\tilde{G}_i$ and $Y$.   A generalized core–satellite graph with shared core $K_l$ and ``satellites'' $\tilde{G}_1, \ldots,  \tilde{G}_n$ is denoted by $\Theta(\tilde{G}_1, \ldots,  \tilde{G}_n,  K_l)$ and it is defined as the graph obtained by gluing $G_1, \ldots, G_n$ along $K_l$.  In other words,
\[
\Theta(\tilde{G}_1, \ldots,  \tilde{G}_n,  K_l) = G_1\cup_{K_l} \ldots \cup_{K_l}G_n 
\]

\end{definition}

\begin{remark} In \cite{estrada2017}, the authors use the join operation rather than gluing operation.  The generalized core-satellite graph defined in \cite{estrada2017} is isomorphic to the special case of Definition \ref{definition:coresatellitegraphs} if we take $\tilde{G}_i$ to be a complete graph.  More precisely,  let $\boldsymbol{s} = (s_1, \ldots, s_n)$, $\boldsymbol{\eta} = (1,  \ldots,  1)$,  and $\tilde{G}_i = K_{s_i}$ and  $\theta(l,\boldsymbol{s}, \boldsymbol{\eta})$ be the generalized core-satellite graph defined in \cite{estrada2017}.  Then,   $\Theta(\tilde{G}_1, \ldots,  \tilde{G}_n,  K_l)$ is isomorphic to $\theta(l,\boldsymbol{s}, \boldsymbol{\eta})$.   Since the graphs $\tilde{G}_i $ in Definition \ref{definition:coresatellitegraphs} are not necessarily distinct,  it covers general $\boldsymbol{\eta}$ as well.  
\end{remark} 

\noindent 
It turns out that the Dirichlet-to-Neumann map $\DN(\Theta(\tilde{G}_1, \ldots,  \tilde{G}_n,  K_l), K_l)$ can be computed explicitly.  
\begin{theorem}\label{theorem:DNcoresatellite}
 Let $G = \Theta(\tilde{G}_1, \ldots,  \tilde{G}_n,  K_l)$ and $Y = K_l$. Then,
\[
\DN(G,Y) = \dfrac{|V(G)|}{|V(Y)|} L(Y)
\]
\begin{proof} We use induction on $n$.  The case $n=1$ follows from Proposition \ref{proposition:dnjoin}.  Note that 
\[
\Theta(\tilde{G}_1, \ldots,  \tilde{G}_n, K_l) = \Theta(\tilde{G}_1, \ldots,  \tilde{G}_{n-1},K_l)\cup_{K_l} G_n  
\]
 By induction hypothesis and Proposition \ref{proposition:dnjoin}
\[
\DN( \Theta(\tilde{G}_1, \ldots,  \tilde{G}_{n-1},  K_l), K_l) = \dfrac{|V(\Theta(\tilde{G}_1, \ldots,  \tilde{G}_{n-1},  K_l))|}{|V(Y)|}L(Y) 
\] and 
\[
 \DN(G_n,Y) = \dfrac{|V(G_n)|}{|V(Y)|} L(Y)
\] 
Using Proposition \ref{prop: gluing DN maps}, we have 
\begin{align*}
\DN(G,Y)& = \DN( \Theta(\tilde{G}_1, \ldots,  \tilde{G}_{n-1},  K_l), K_l) + \DN(G_n,  K_l) - L(Y) \\
             & = \dfrac{|V(\Theta(\tilde{G}_1, \ldots,  \tilde{G}_{n-1},  K_l))|}{|V(Y)|}L(Y) + \dfrac{|V(G_n)|}{|V(Y)|} L(Y) - L(Y) \\
            &= \dfrac{|V(\Theta(\tilde{G}_1, \ldots,  \tilde{G}_{n-1},  K_l))| + |V(G_n)|- |V(Y)|}{|V(Y)|} L(Y) \\
            & =  \dfrac{|V(G)|}{|V(Y)|} L(Y)
\end{align*}
\end{proof} 
\end{theorem}

\begin{figure}[H]
    \centering
    \begin{tikzpicture}[
        vertex/.style={circle, draw, fill=black, inner sep=1.5pt},
        coreedge/.style={thick},
        satedge/.style={thick},
        joinedge/.style={gray, thin},
        region/.style={thick}
    ]

        \draw[red, region]    (0, 1.55) ellipse (1.45 and 2.45);
        \draw[blue, region]   (2.5, 0) ellipse (3.45 and 1.35);
        \draw[teal, region]   (0,-1.55) ellipse (1.45 and 2.45);
        \draw[purple, region] (-2.5, 0) ellipse (3.45 and 1.35);

        \node[vertex] (y1) at (0, 0.65) {};
        \node[vertex] (y2) at (-0.6, -0.4) {};
        \node[vertex] (y3) at (0.6, -0.4) {};

        \node[vertex] (a1) at (-0.65, 3.15) {};
        \node[vertex] (a2) at (0.65, 3.15) {};
        \node[vertex] (a3) at (0, 2.45) {};

        \node[vertex] (b1) at (3.15, 0.85) {};
        \node[vertex] (b2) at (4.15, 0.45) {};
        \node[vertex] (b3) at (4.15, -0.45) {};
        \node[vertex] (b4) at (3.15, -0.85) {};

        \node[vertex] (c1) at (-0.8, -3.15) {};
        \node[vertex] (c2) at (0.2, -3.45) {};
        \node[vertex] (c3) at (0.95, -2.75) {};
        \node[vertex] (c4) at (0.35, -2.15) {};
        \node[vertex] (c5) at (-0.75, -2.35) {};

        \node[vertex] (d1) at (-3.15, 0.95) {};
        \node[vertex] (d2) at (-4.15, 0.65) {};
        \node[vertex] (d3) at (-4.55, -0.15) {};
        \node[vertex] (d4) at (-4.0, -0.9) {};
        \node[vertex] (d5) at (-3.1, -0.75) {};
        \node[vertex] (d6) at (-2.75, 0.1) {};

        \foreach \s in {a1,a2,a3,b1,b2,b3,b4,c1,c2,c3,c4,c5,d1,d2,d3,d4,d5,d6} {
            \foreach \y in {y1,y2,y3} {
                \draw[joinedge] (\s) -- (\y);
            }
        }

        \draw[satedge] (a1) -- (a2);
        \draw[satedge] (a2) -- (a3);
        \draw[satedge] (a3) -- (a1);

        \draw[satedge] (b1) -- (b2);
        \draw[satedge] (b2) -- (b3);
        \draw[satedge] (b3) -- (b4);
        \draw[satedge] (b4) -- (b1);
        \draw[satedge] (b1) -- (b3);

        \draw[satedge] (c1) -- (c2);
        \draw[satedge] (c2) -- (c3);
        \draw[satedge] (c3) -- (c4);
        \draw[satedge] (c4) -- (c5);
        \draw[satedge] (c5) -- (c1);
        \draw[satedge] (c2) -- (c4);

        \draw[satedge] (d1) -- (d2);
        \draw[satedge] (d2) -- (d3);
        \draw[satedge] (d3) -- (d4);
        \draw[satedge] (d4) -- (d5);
        \draw[satedge] (d5) -- (d6);
        \draw[satedge] (d6) -- (d1);
        \draw[satedge] (d1) -- (d4);
        \draw[satedge] (d2) -- (d5);

        \node at (0, 4.5) {$G_1$};
        \node at (6.9, 0) {$G_2$};
        \node at (0, -4.5) {$G_3$};
        \node at (-6.9, 0) {$G_4$};

        \node at (0, -0.64) {$K_3$};
        \draw[coreedge] (y1) -- (y2);
        \draw[coreedge] (y2) -- (y3);
        \draw[coreedge] (y1) -- (y3);

    \end{tikzpicture}
    \caption{A generalized core--satellite graph $\Theta(\tilde{G}_1, \tilde{G}_2, \tilde{G}_3,  \tilde{G}_4,  K_3)$ with shared core $K_3$.}
    \label{Fig:CoreSatelliteGraphK3}
\end{figure}

In the next theorem,  we give a closed form expression for the number of spanning trees of $\Theta(\tilde{G}_1, \ldots,  \tilde{G}_n,  K_l)$ in terms of the number of spanning trees on $G_1,\ldots, G_n$. 

\begin{theorem} \label{theorem:maincorestatellite}

The number of spanning trees of  $\Theta(\tilde{G}_1, \ldots,  \tilde{G}_n,  K_l)$ is given by 
\begin{equation}
\tau(\Theta(\tilde{G}_1, \ldots,  \tilde{G}_n,  K_l)) = l^{n-1} \left(\dfrac{|V(\Theta(\tilde{G}_1, \ldots,  \tilde{G}_n,  K_l))|}{\prod_{i=1}^{n}|V(G_i)|}\right)^{l-1}\prod_{i=1}^{n}\tau(G_i)
\end{equation} 
\begin{proof} We use induction on $n$.  For $n=1$,  the result is trivial.  Using
\[
\Theta(\tilde{G}_1, \ldots,  \tilde{G}_n, K_l) = \Theta(\tilde{G}_1, \ldots,  \tilde{G}_{n-1},K_l)\cup_{K_l} G_n  
\] and Theorem  \ref{cor: main cor-2},  we get 
\begin{align}\label{eqon:aux}
\begin{array}{lll}
\tau(\Theta(\tilde{G}_1, \ldots,  \tilde{G}_n, K_l))& = l \tau(\Theta(\tilde{G}_1, \ldots,  \tilde{G}_{n-1}, K_l))\tau(G_n) \\
& \dfrac{{\det}^{\prime}(\DN(\Theta(\tilde{G}_1, \ldots,  \tilde{G}_n, K_l), K_l))}{{\det}^{\prime}(\DN(\Theta(\tilde{G}_1, \ldots,  \tilde{G}_{n-1}, K_l), K_l)){\det}^{\prime}(\DN(G_n,K_l))}
\end{array}
\end{align}
From Theorem \ref{theorem:DNcoresatellite},  it follows that 
\begin{align}\label{eqon:aux1}
\begin{array}{lll}
&{\det}^{\prime}(\DN(\Theta(\tilde{G}_1, \ldots,  \tilde{G}_n, K_l), K_l)) = |V(\Theta(\tilde{G}_1, \ldots,  \tilde{G}_n, K_l))|^{l-1} \\\\
& {\det}^{\prime}(\DN(G_n,  K_l)) = |V(G_n)|^{l-1}
\end{array}
\end{align} 
Using induction hypothesis and Theorem \ref{theorem:DNcoresatellite}, we have 
\begin{equation}\label{eqon:aux2}
\dfrac{\tau(\Theta(\tilde{G}_1, \ldots,  \tilde{G}_{n-1}, K_l))}{{\det}^{\prime}(\DN(\Theta(\tilde{G}_1, \ldots,  \tilde{G}_{n-1}, K_l), K_l))} =  l^{n-2} \left(\dfrac{1}{\prod_{i=1}^{n-1}|V(G_i)|}\right)^{l-1}\prod_{i=1}^{n-1}\tau(G_i)
\end{equation} 
The proof of the theorem is completed by using relations (\ref{eqon:aux1}) and (\ref{eqon:aux2}) in (\ref{eqon:aux}). 
\end{proof}
\end{theorem}

As a consequence,  we give a new proof of the following recent result \cite{estrada2017, yang2026}.   
\begin{corollary}\label{corollary:Core-Satellite} Let $\tilde{G}_i = K_{s_i}$,  $i=1, \ldots, n$.   
\[
\tau(\Theta(\tilde{G}_1, \ldots,  \tilde{G}_n, K_l))=  l^{n-1}|V(\Theta(\tilde{G}_1, \ldots,  \tilde{G}_n, K_l))|^{l-1} \prod_{i=1}^{n}(l+s_i)^{(s_i-1)}
\] 
\begin{proof} Let $n_i = l + s_i$.  Then $G_i$ is isomorphic to $K_{n_i}$.  Hence,  By Caley's formula, \\
$\tau (G_i) = n_i^{n_i-2}$.   In this case,
\[
\dfrac{\prod_{i=1}^{n} \tau(G_i)} {\prod_{i=1}^{n}|V(G_i)|^{l-1}} =\prod_{i=1}^{n} \dfrac{n_i^{n_i-2}}{n_i^{l-1}} = \prod_{i=1}^{n} n_i^{s_i-1}=\prod_{i=1}^{n} (l+s_i)^{s_i-1}
\] Now,  the proof follows immediately from Theorem \ref{theorem:maincorestatellite}. 
\end{proof} 
\end{corollary}

\begin{note} The statement in Corollary \ref{corollary:Core-Satellite} appears in \cite{yang2026} which immediately follows from the explicit computation of spectrum of the Laplacian of $G_k$ in \cite{estrada2017} and the well known Matrix-Tree theorem.   
\end{note}

\begin{remark} While the complete spectrum of Laplacian of a core-satellite graph is computed in \cite{estrada2017}.  Our proof of Corollary \ref{corollary:Core-Satellite}  suggests that the spectrum can be computed inductively.  
\end{remark}

\begin{remark} It is possible to give a proof of Theorem  \ref{theorem:DNcoresatellite} using properties of the number of spanning trees under the join operations.  Here, our focus was to utilize the gluing formula for spanning trees. 
\end{remark}

\subsection{Gluing Cycle Graphs} Here,  we give a closed form formula for the number of spanning trees when a graph is formed by gluing two cycle graphs. Let $C_n$ denote cycle graph with n vertices.  Let the vertices be  $v_1,\ldots, v_n$.  Let $P_l$ be a path graph inside $C_n$ with $l$ vertices. 
\begin{lemma} \label{lemma:DNpath} Let
\[
L(C_n)  = \begin{bmatrix} A & B\\
                                     B^T & D
                 \end{bmatrix} 
\] Then, 
\begin{itemize}
    \item[(a)] The Dirichlet-to-Neumann map is given by
    \begin{equation}
   \DN(C_n, P_l) = D - \tilde{D} 
\end{equation}
where $\tilde{D}$ is $l\times l$ matrix such that $\tilde{D}_{11} = (n-l)/(n-l+1), \tilde{D}_{1l} = \tilde{D}_{l1} = 1/(n-l+1)$, and all other components $0$. 
\item[(b)] The pseudo-determinant of $\DN(C_n, P_l)$ is given by 
\[
{\det}^{\prime}(\DN(C_n, P_l)) = \dfrac{nl}{n-l+1}
\]
\end{itemize}
\begin{proof} For (a),  we note that $A$ is $(n-l) \times (n-l)$ tri-diagonal matrix with main diagonal consisting of all 2's; and sub and super diagonal consisting of all -1's. Hence, from \cite{da2001}, it follows that $A^{-1}_{ij}$ is given by 
\[
 A^{-1}_{ij} = \begin{cases} \dfrac{i((n-l)-j+1)}{n-l+1} \quad \text{if} \quad i \leq j \\
             \dfrac{j((n-l)-i+1)}{n-l+1} \quad \text{if} \quad i \geq  j
\end{cases}
\] Note that $B$ is $(n-l)\times l$ matrix such $B_{1l} = -1$ and $B_{n-l1} =-1$ and all other components $0$. Hence, 
\[
\tilde{D} = B^TA^{-1}B
=\begin{bmatrix}
        \frac{n-l}{n-l+1} & 0 & \cdots  & 0 & \frac{1}{n-l+1}\\
        0 & \ddots & && 0\\
        \vdots & &\ddots  & & \vdots\\
        0 & & & \ddots & 0\\
        \frac{1}{n-l+1} & 0 & \cdots & 0 & \frac{n-l}{n-l+1}
    \end{bmatrix}
\]
 For (b), by Kirchhoff matrix tree theorem \cite{Klee2019}: 
 \[
 {\det}^{\prime}(\DN(C_n,P_l))  = l M_{ll}
 \] where $M_{ll}$ is $(l,l)$ minor of $\DN(C_n,P_l)$. Note that $M_{ll}$ is determinant of a tri-diagonal matrix and in this case it is given  $M_{ll} = \dfrac{n}{n-l+1}$
 as needed. 
\end{proof}
\end{lemma}

\begin{remark}\label{remark:dettridiagonal} Let $M$ be a $k \times k$ tri-diagonal matrix whose main diagonal is given by $(\alpha,2, \ldots, 2)$ and sub and super diagonal are give by $(-1, \ldots, -1)$, then
\[
\det(M) = k\alpha - (k-1)
\]
    
\end{remark}

 \begin{figure}[H]
    \centering
    \usetikzlibrary{fit,positioning}

    \begin{tikzpicture}[
        vertex/.style={circle, draw, fill=black, inner sep=1.5pt},
        edge/.style={thick}
    ]

        \node[vertex] (y1) at (0, 1) {};
        \node[vertex] (y2) at (0, 0) {};
        \node[vertex] (y3) at (0,-1) {};

        \node[vertex] (a1) at (-1.6, 1.4) {};
        \node[vertex] (a2) at (-1.6,-1.4) {};

        \node[vertex] (b1) at (1.6, 1.6) {};
        \node[vertex] (b2) at (2.4, 0.7) {};
        \node[vertex] (b3) at (2.4,-0.7) {};
        \node[vertex] (b4) at (1.6,-1.6) {};

        \draw[edge] (y1) -- (y2);
        \draw[edge] (y2) -- (y3);

        \draw[edge] (y1) -- (a1);
        \draw[edge] (a1) -- (a2);
        \draw[edge] (a2) -- (y3);

        \draw[edge] (y1) -- (b1);
        \draw[edge] (b1) -- (b2);
        \draw[edge] (b2) -- (b3);
        \draw[edge] (b3) -- (b4);
        \draw[edge] (b4) -- (y3);

        \node[draw=red, rounded corners, fit=(y1)(y2)(y3), inner sep=14pt] (boxY) {};
        \node[draw=black, dotted, rounded corners, fit=(y1)(y2)(y3)(a1)(a2), inner sep=27pt] (boxGOne) {};
        \node[draw=blue, dotted, rounded corners, fit=(y1)(y2)(y3)(b1)(b2)(b3)(b4), inner sep=19pt] (boxGTwo) {};

        \node[below=0pt of boxY] {$Y=P_3$};
        \node[left=4pt of boxGOne] {$G_1=C_5$};
        \node[right=4pt of boxGTwo] {$G_2=C_7$};

    \end{tikzpicture}
    \caption{$C_5$ and $C_7$ glued along a path graph $P_3$.}
    \label{Fig:GluingCyclesAlongP3}
\end{figure}

\begin{proposition} Let $l < \min\{m,n\}$ and $G = C_m \cup_{P_l} C_n$.  Then,
\begin{itemize}
    \item[(a)] The Dirichlet-to-Neumann map $\DN(G, P_l)$ is given by
    \[
    \DN(G, P_l) = L(Y) + \hat{D}
    \] where $\hat{D}$ is $l\times l$ matrix such that 
\[\hat{D}_{11} = \hat{D}_{ll}= \frac{1}{(n-l+1)} + \frac{1}{(m-l+1)}; \,\,\hat{D}_{1l} = \hat{D}_{l1} = - \hat{D}_{11}\]
and all other components $0$: 
  \[\hat{D} 
=\left(\dfrac{1}{m-l+1} + \dfrac{1}{n-l+1}\right)
\begin{bmatrix}
        1 & 0 & \cdots  & 0 & -1\\
        0 & \ddots & && 0\\
        \vdots & &\ddots  & & \vdots\\
        0 & & & \ddots & 0\\
       -1 & 0 & \cdots & 0 & 1
    \end{bmatrix}
\] In particular,
\[
{\det}^{\prime}(\DN(G,P_l)) = \frac{l(mn- (l-1)^2)}{(m-l+1)(n-l+1)}
\]
    \item[(b)]The number of spanning trees on $G$ is given by
    \[
    \tau(G) = mn - (l-1)^2
    \]
\end{itemize}
    \begin{proof} The first part of (a) directly follows from Proposition \ref{prop: gluing DN maps} and Lemma \ref{lemma:DNpath}. For the second part of (a) we proceed as in Lemma \ref{lemma:DNpath} and use $(l,l)$ minor. Now using Remark \ref{remark:dettridiagonal} to compute this minor leads to the desired result.  

        For part (b), we use Gluing relation for spanning trees Theorem  \ref{cor: main cor-2}. 
  \[
  \tau(G) = l \tau(C_n)\tau(C_m) \dfrac{\frac{l(mn- (l-1)^2))}{(m-l+1)(n-l+1)}}{\frac{ml}{(m-l+l)} \frac{nl}{(n-l+1)}} = mn- (l-1)^2
  \]      
    \end{proof}
\end{proposition}

\bibliographystyle{alpha}
\bibliography{graph-theory.bib}

\vskip10mm
S\MakeLowercase{HIVJYOT} B\MakeLowercase{RAR}\\
D\MakeLowercase{EPARTMENT} Of C\MakeLowercase{OMPUTER} S\MakeLowercase{CIENCE}\\
C\MakeLowercase{ALIFORNIA} S\MakeLowercase{TATE} U\MakeLowercase{NIVERSITY}, S\MakeLowercase{ACRAMENTO}\\
\textit{E-mail address: } \texttt{sbrar@csus.edu}
\newline

Sheng-Chang Chen\\
Department of Mathematics and Statistics\\
C\MakeLowercase{ALIFORNIA} S\MakeLowercase{TATE} U\MakeLowercase{NIVERSITY}, S\MakeLowercase{ACRAMENTO}\\
\textit{E-mail address: }\texttt{sheng-changchen@csus.edu}
\newline

Sayonita Ghosh Hajra\\
Department of Mathematics and Statistics\\
C\MakeLowercase{ALIFORNIA} S\MakeLowercase{TATE} U\MakeLowercase{NIVERSITY}, S\MakeLowercase{ACRAMENTO}\\
\textit{E-mail address: }\texttt{sayonita.ghoshhajra@csus.edu}
\newline

Santosh Kandel\\
Department of Mathematics and Statistics\\
C\MakeLowercase{ALIFORNIA} S\MakeLowercase{TATE} U\MakeLowercase{NIVERSITY}, S\MakeLowercase{ACRAMENTO}\\
\textit{E-mail address: }\texttt{kandel@csus.edu}

\end{document}